\documentclass[11pt,a4paper]{article}
\hoffset=-0.5in \textwidth=6in

\usepackage{amsthm,amsmath}
\usepackage{mathrsfs}
\usepackage{amssymb}
\usepackage{latexsym}
\usepackage{yfonts}

\usepackage{graphicx}
\usepackage{color}
\def\E{{\mathbb E}}
\def\P{{\mathbb P}}

\newcommand{\RR}[1]{\mathbb{#1}}
\newtheorem{te}{Theorem}[section]

\theoremstyle{definition}

\theoremstyle{os}
\newtheorem{os}[te]{Remark}

\theoremstyle{prop}
\newtheorem{prop}[te]{Proposition}

\theoremstyle{lem}
\newtheorem{lem}[te]{Lemma}

\theoremstyle{coro}
\newtheorem{coro}[te]{Corollary}

\numberwithin{equation}{section}

\title{Time dependent random fields on spherical non-homogeneous surfaces}

\author{Mirko D'Ovidio\footnote{Dipartimento di Scienze di Base e Applicate per l'Ingegneria, Sapienza Universit\`{a} di Roma, Via A. Scarpa 16, 00161 Roma, Italy. E-mail:mirko.dovidio@sbai.uniroma1.it}  and Erkan Nane\footnote{221 Parker Hall, Department of Mathematics and Statistics,
Auburn University, Auburn, Al 36849.
E-mail: nane@auburn.edu. http://www.auburn.edu/$\sim$ezn0001}}

\begin{document}

\maketitle

\begin{abstract}
We introduce a class of isotropic time dependent random fields on the non-homogeneous sphere represented by a time-changed spherical Brownian motion of order $\nu \in (0,1]$ with which some anisotrophies   can be captured in Cosmology. This process is a time-changed rotational diffusion (TRD) or the stochastic solution to the equation involving the spherical Laplace operator and a time-fractional derivative of order $\nu$.  TRD is a diffusion on the non-homogeneous sphere and therefore, the spherical coordinates given by  TRD represent the coordinates of a non-homogeneous sphere by means of which an isotopic random field is indexed. The time dependent random fields we present in this work is therefore realized through composition and can be viewed as isotropic random field on randomly varying sphere.
\end{abstract}

{\bf keyword:}random field on the sphere, time-changed rotational Brownian motion, stable subordinator, fractional diffusion, Wigner coefficient, Clebsch-Gordan coefficient, CMB radiation.

\tableofcontents

\section{Introduction and statement of main results}
In recent years a growing literature has been devoted to the study of the random fields on the sphere and their statistical analysis. Many researchers have focused on the construction and characterization of random field indexed by compact manifolds  such as the sphere $\mathbb{S}^2_r = \{\mathbf{x} \in \mathbb{R}^3:\, |\mathbf{x}|=r\}$, see for example \cite{balMar07, marin06, marin08, marpec08}. In such papers the sphere represents a homogeneous surface in which the random field is observed. The interest in studying random fields on the sphere is especially represented by the analysis of the Cosmic Microwave Background (CMB) radiation which is currently at the core of physical and cosmological research, see for instance \cite{Dodelson, kolturner}. CMB radiation is thermal radiation filling the observable universe almost uniformly \cite{PenWil65} and is well explained as radiation associated with an early stage in the development of the universe. From a mathematical viewpoint the CMB radiation can be interpreted as a realization of an isotropic, mean-square continuous spherical random field for which a spectral representation given by means of spherical harmonics holds. Due to the Einstein cosmological principle (on sufficiently large scales,  the universe looks identical everywhere in space (homogeneity) and appears the same in every direction (isotropy)) the CMB radiation is an isotropic image of the early universe \cite{Dodelson}. Nevertheless, such a nature of the CMB radiation can be affected by anisotropies as those due to the gravitational lensing for instance. Recently, a growing attention has been drawn to high-frequency or equivalently high-resolution asymptotics for statistics based upon functionals of isotropic random fields (see for instance \cite{BKMP09, marin06}).

Beside the interest on random fields, particular attention has been also paid, in last years, by yet other researchers in studying fractional diffusion equations. These equations are related with anomalous diffusions or diffusions in non-homogeneous media, with random fractal structures for instance \cite{meerschaert-nane-xiao}. Starting from the works  \cite{Koc89, Nig86, Wyss86} much effort has been made in order to introduce a rigorous mathematical approach (see for example \cite{NANERW} for a short survey on this results). The solutions to fractional diffusion equations are strictly related with stable densities. Indeed, the stochastic solutions we are dealing with can be realized through time-change that are inverse stable subordinators and therefore we obtain time-changed  processes.   A couple of recent works in this field are \cite{BMN09ann, OB09}.

Let ${D}^\nu_t g(t)$ with $\nu \in (0, 1]$, be the Dzhrbashyan-Caputo fractional derivative of $g(t)$ of order $\nu$  defined by  its Laplace transform $\int_0^\infty e^{-st}{D}^\nu_t g(t)dt=s^\nu \tilde g(s)-s^{\nu-1}g(0)$ where $\tilde g(s)=\int_0^\infty e^{-st} g(t)dt$ is the Laplace transform of $g(t)$, and ${D}^\nu_t g(t)$ becomes the ordinary  first derivative $\partial g(t) / \partial t $ for $\nu=1$.

 Let $\mathfrak{H}^\nu_t$ be a stable subordinator of index $\nu\in (0,1)$ with Laplace transform
 \begin{equation}
 \mathbb{E} \exp ( -s  \mathfrak{H}^{\nu}_t)  = \exp (- t \, s^\nu). \label{lapH}
\end{equation}  We define by
 \begin{equation}\label{inverse-stable}
 \mathfrak{L}^{\nu}_t=\inf\{\tau>0:\mathfrak{H}^{\nu}_\tau>t \}
 \end{equation}
    the inverse of the stable subordinator $\mathfrak{H}^\nu_t$ of order $\nu \in (0,1)$.
   $\mathfrak{L}^\nu_t$ has non-negative, non-stationary and non-independent increments (see \cite{MSheff04}).
  Let
 \begin{equation}\label{mittag-leffler-function}
  E_{\nu}(z)=\sum_{n=1}^\infty \frac{z^n}{\Gamma(1+\nu)}
  \end{equation} be  the Mittag-Leffler function. By equation (3.16) in \cite{BMN09ann} the Laplace transform of $\mathfrak{L}^{\nu}_t$ is given by
\begin{equation}
\mathbb{E} \exp (- \lambda \mathfrak{L}^{\nu}_t)  = E_{\nu}(-\lambda t^\nu).  \label{lapL}
\end{equation}

 Here and in the sequel  we use the fact that $x\in \mathbb{S}_{r}^2$ can be represented as $$x=(r\sin\vartheta \cos \varphi, r\sin\vartheta \sin \varphi, r\cos \vartheta ).$$
 The spherical Laplacian is defined by
\begin{align*}
\triangle_{\mathbb{S}_{r}^2} = & \frac{1}{r^2} \left[ \frac{1}{\sin^2 \vartheta} \frac{\partial^2}{\partial \varphi^2} + \frac{1}{\sin \vartheta} \frac{\partial}{\partial \vartheta} \left( \sin \vartheta \frac{\partial}{\partial \vartheta} \right) \right], \ \ \vartheta \in [0,\pi],\; \varphi \in [0, 2\pi].
\end{align*}
Sometimes $\triangle_{\mathbb{S}_{r}^2}$ is called Laplace operator on the sphere.
 For the sake of simplicity we will consider $r=1$, that is the sphere of radius one.

Since the density of   Brownian motion $\mathfrak{B}^x(t)$ on the sphere $\mathbb{S}^2_1$ started at $x\in \mathbb{S}^2_1$  solves equation
\begin{equation}\label{brownian-pde}
\frac{\partial \,u(x, t) }{\partial t}=\partial_t u(x,t)= \triangle_{\mathbb{S}^2_1}\, u(x, t), \quad x \in \mathbb{S}^2_1,\; t>0
\end{equation}
we say $\mathfrak{B}(t)$, $t>0$ is a stochastic solution to \eqref{brownian-pde}.

Let $\langle x, y \rangle=\cos d(x,y)$ be the usual inner product in $\mathbb{R}^3$. With the notation  $x=(\sin\vartheta_x \cos \varphi_x, \sin\vartheta_x \sin \varphi_x, \cos \vartheta_x )$ and $y=(\sin\vartheta_y \cos \varphi_y, \sin\vartheta_y \sin \varphi_y, \cos \vartheta_y )$ we have
$$
\cos d(x,y)=\cos \vartheta_x \cos \vartheta_y+\sin\vartheta_x \sin\vartheta_y\cos (\varphi_x-\varphi_x)=\langle x, y \rangle.
$$

 Let $P_l$, $l = 0,1,2,\cdots $ be the  Legendre functions associated to the eigenvalue problem $\Delta_{\mathbb{S}^2_1}u=-\mu_l u$.  Let $\{R_l,\,l\geq 0\}$ be the set of positive real numbers depending on the angular power spectrum (defined in section \ref{SecRBM}) of $u_0$ with $R_0=1$.
The space $H^{s}(\mathbb{S}^2_1)$ is a subset of $L^2(\mathbb{S}_{1}^2)$ and it is defined in equation \eqref{SobolevS}.

Our interest, in this work, is to study the stochastic solution to a time-fractional Cauchy problem on $\mathbb{S}^2_1$ and obtain a new random structure for the sphere by means of which the random field is indexed.

\begin{te}Let  $\nu \in (0,1)$ and $s>7/2$.
The unique strong solution to the fractional Cauchy problem
\begin{equation}\label{pdeFracSphere}
\begin{split}
\partial^{\nu}_{t} u_\nu(x,t; x_0, t_0) &= \triangle_{\mathbb{S}_{1}^2} u_{\nu}(x,t; x_0, t_0), \quad x, x_0 \in \mathbb{S}^2_{1}, \, 0 \leq t_0 < t \\
u_{\nu}(x, t_0; x_0, t_0) &= u_0(x - x_0), \quad u_0(\cdot - x_0) \in H^{s}(\mathbb{S}^2_1), \;
\end{split}
\end{equation}
 is given by
\begin{equation}
u_{\nu}(x, t; x_0, t_0) =\mathbb{E}u_0\left(\mathfrak{B}^{x}(\mathfrak{L}^\nu_{t-t_0})-x_0 \right)
\end{equation}
where $\mathfrak{B}^{x}_{\nu}(t)$, $t>t_0$ is the time-changed rotational Brownian motion on the sphere $\mathbb{S}^2_1$ started at $x$ at time $t_0$. The explicit solution is written as
\begin{equation}
u_{\nu}(x, t; x_0, t_0) = \sum_{l \geq 0} \frac{2l+1}{4\pi} R_l\, E_{\nu}\left( - \mu_l\, (t-t_0)^{\nu} \right)  P_l(\langle x, x_0 \rangle) \label{lawFracRot}
\end{equation}
\label{TheoremMain}
\end{te}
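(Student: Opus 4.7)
The plan is to verify the explicit spectral series \eqref{lawFracRot} solves the fractional Cauchy problem, identify this series with the stochastic representation, and then argue uniqueness by a mode-by-mode ODE argument. First I would reduce to $t_0=0$ by time-translation invariance of the problem. Expanding the initial datum $u_0(\cdot - x_0)$ in spherical harmonics and invoking the addition formula
\[
\sum_{m=-l}^{l} Y_{l,m}(x)\overline{Y_{l,m}(x_0)} = \frac{2l+1}{4\pi}\, P_l(\langle x,x_0\rangle),
\]
the relevant projection of $u_0(\cdot - x_0)$ takes the form $\sum_{l\geq 0}\frac{2l+1}{4\pi} R_l\, P_l(\langle x,x_0\rangle)$, with $R_l$ the coefficients associated to the angular power spectrum introduced in Section \ref{SecRBM}. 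Since $E_\nu(0)=1$, this already matches the initial condition at $t=t_0$.

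Next I would recover the stochastic representation spectrally. The transition kernel of the spherical Brownian motion $\mathfrak{B}^x$ admits the expansion $p_t(x,y)=\sum_l \frac{2l+1}{4\pi}\, e^{-\mu_l t} P_l(\langle x,y\rangle)$, so integration against $u_0(\cdot - x_0)$ gives
\[
\mathbb{E}\, u_0(\mathfrak{B}^x(t)-x_0) = \sum_{l\geq 0}\frac{2l+1}{4\pi}\, R_l\, e^{-\mu_l t}\, P_l(\langle x,x_0\rangle).
\]
Conditioning on the independent inverse stable subordinator $\mathfrak{L}^\nu_t$ and applying \eqref{lapL} termwise replaces each $e^{-\mu_l t}$ by the Mittag-Leffler factor $E_\nu(-\mu_l t^\nu)$, producing exactly \eqref{lawFracRot}. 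To verify the PDE, I would apply $\partial^\nu_t$ and $\triangle_{\mathbb{S}^2_1}$ termwise, invoking the two eigenrelations $\triangle_{\mathbb{S}^2_1}\, P_l(\langle\cdot,x_0\rangle) = -\mu_l\, P_l(\langle\cdot,x_0\rangle)$ and $\partial^\nu_t\, E_\nu(-\mu_l t^\nu) = -\mu_l\, E_\nu(-\mu_l t^\nu)$; the eigenvalues match across the series.

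For uniqueness, given any strong solution $v$ in $H^s(\mathbb{S}^2_1)$, expanding in the $Y_{l,m}$ basis yields coefficients $a_{l,m}(t)$ satisfying the scalar fractional ODE $\partial^\nu_t a_{l,m} = -\mu_l a_{l,m}$ with prescribed initial values, and this ODE has the unique solution $a_{l,m}(0)\, E_\nu(-\mu_l t^\nu)$. The main technical obstacle is justifying the termwise action of $\partial^\nu_t$ and $\triangle_{\mathbb{S}^2_1}$ on the series together with the interchange of expectation and infinite sum: the growth $\mu_l\sim l(l+1)$ combined with the multiplicity $2l+1$ of the degree-$l$ eigenspace forces control of a weighted sum in the $R_l$, and the hypothesis $s>7/2$ is the precise Sobolev threshold ensuring, via $H^s(\mathbb{S}^2_1)\hookrightarrow C^2(\mathbb{S}^2_1)$ and rapid decay of the harmonic coefficients, absolute convergence of the differentiated and Laplacian-differentiated series so that the strong-sense equation holds pointwise in $(x,t)$.
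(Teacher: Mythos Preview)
Your proposal is correct and follows essentially the same strategy as the paper: spectral expansion in spherical harmonics, the eigenrelation \eqref{eigenDcaputo} for the Caputo derivative acting on Mittag--Leffler functions, subordination via \eqref{lapL} for the stochastic representation, and uniqueness by the scalar fractional ODE on each mode. The only refinement is that the paper justifies termwise application of $\partial^\nu_t$ not through an abstract $C^2$ embedding but via the pointwise bound $|\partial_t E_\nu(-\mu_l t^\nu)| \leq c\,\mu_l\, t^{\nu-1}$ combined with Cauchy--Schwarz over $|m|\le l$, and this is precisely where the threshold $s>7/2$ (rather than the $s>3$ that a $C^2$ embedding on a $2$-manifold would give) actually enters.
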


This theorem is an extension of the results in \cite{BMN09ann} and \cite{meerschaert-skorski} to the spherical Laplacian on  the sphere.

$\mathfrak{B}^{x_0}_\nu(t)=\mathfrak{B}^{x_0}(\mathfrak{L}^\nu_t)$ is called  time-changed rotational Brownian motion on $\mathbb{S}^2_1$. $\mathfrak{B}^{x_0}_\nu(t)$  is a stochastic solution to \eqref{pdeFracSphere} and  has non-independent, non-stationary and non-negative increments.  Furthermore, for $\nu \to 1$, we get that a.s. $\mathfrak{L}^\nu_t \to t$ which is the elementary subordinator (see \cite{Btoi96}).
 When $\nu=1$ the time-changed rotational Brownian motion becomes the rotational Brownian motion or Brownian motion on the sphere $\{\mathfrak{B}(t)$, $t>0$.

We next consider a real-valued  random field  on the sphere $\{T(x):\, x \in \mathbb{S}^2_1\}$. Let $SO(3)$ be  the group of rotations  in $\mathbb{R}^{3}$ that can be realized as the space of $3\times 3$ real matrices $A$ such that $A'A=I_3$ (where $I_3$  is the three-dimensional identity matrix) and $det (A)=1$.  Suppose that $SO(3)$ acts on $\mathbb{S}^2_1$  with $g\to gx$. A random field $T$ is said to be $n$-weakly isotropic if $\mathbb{E}|T(gx)|^n < \infty$ ($n \geq 2$) for every $x \in \mathbb{S}^2_1$ and if, for every $x_1, \ldots , x_n \in \mathbb{S}^2_1$ and every $g \in SO(3)$ we have that
\begin{equation*}
\mathbb{E}[T(x_1) \times \cdots \times T(x_n)] = \mathbb{E}[T(gx_1) \times \cdots \times T(gx_n)].
\end{equation*}

 We consider the composition of the random field $T$  with an independent   time-changed  rotational Brownian motion $\mathfrak{B}^{x}_\nu(t) \in \mathbb{S}^2_1$, $t>t_0$ given by the time dependent random field
\begin{equation}
\left\lbrace \mathfrak{T}^{\nu}_t (x)=T(\mathfrak{B}^{x}_\nu(t)), \, x \in \mathbb{S}^2_1, \, t>t_0\right\rbrace, \quad \nu \in (0,1]. \label{QRFintro}
\end{equation}
We will call $ \{\mathfrak{T}^{\nu}_t (x), t>t_0\}$ random field on non-homogeneous sphere as the set $\{\mathfrak{B}^{x}_\nu(t), t>t_0\}\subset \mathbb{S}^2_1 $ is a non-homogeneous set.

 In our view, the composition \eqref{QRFintro} can be regarded as a random field with randomly shifted index. Indeed, we can introduce the shift $s_t$ such that
\begin{equation}
s_t\, x = x + \mathfrak{B}^{0}_\nu(t-t_0) = \mathfrak{B}^{x}_\nu(t), \quad \forall\, x \in \mathbb{S}^2_1 \label{shiftIntro}
\end{equation}
where $0=x_N$ is the North Pole. Thus, the time dependent random field \eqref{QRFintro} can be rewritten as follows
\begin{equation}
\left\lbrace \mathfrak{T}^{\nu}_t (x)=T(s_t\, x), \, x \in \mathbb{S}^2_1, \, t>t_0\right\rbrace, \quad \nu \in (0,1]. \label{QRFintroSchift}
\end{equation}
Formula \eqref{QRFintroSchift} says that the compact support of the random field $T$ is affected by the random action of the shift \eqref{shiftIntro}.

We first obtain in Lemma \ref{n-moment-time-changed-field} that
\begin{equation*}
\mathbb{E}[\mathfrak{T}^{\nu}_t (gx)]^n = \sum_{l_1\ldots l_n} \sqrt{\frac{(2l_1+1)\cdots (2l_n+1)}{(4\pi)^n}} \mathbb{E}[a_{l_10}\cdots a_{l_n 0}] < \infty, \quad g \in SO(3)
\end{equation*}
(is finite) where
$$a_{l_j m_j} = \int_{\mathbb{S}^2_1} T(x)Y^*_{l_j m_j}(x) \lambda(dx), \quad l_j \geq 0,\; |m_j | \leq l_j,\; j=1,2, \ldots , n$$ are the coefficient of $T(x)$ with respect to the orhonormal basis of spherical harmonics $\{Y_{lm}:l, m\in \mathbb{Z}, l\geq 0, |m|\leq l\}$ of $L^2(\mathbb{S}^2_1)$.
$a_{l_j m_j}$ are uncorrelated (over $l$) random variables, $\lambda$ is the Lebesgue measure on the sphere $\mathbb{S}^2_1$ and $\mathbb{E}[a_{l_10}\cdots a_{l_n 0}]$ is called the angular polyspectrum of order $n-1$ associated with the field $T$ (See Lemma 4.2 below). The fact that $\mathbb{E}[\mathfrak{T}^{\nu}_t (gx)]^n < \infty$ is a necessary condition for $\mathfrak{T}^{\nu}_t (x)$ to be isotropic.

The next  theorem  presents the time- and space- covariance of \eqref{QRFintro} and this is our second main result in this paper.
\begin{te}\label{second-main-thm}
For $0 \leq t_0 < t_1 \leq t_2 < \infty$ and $\nu \in (0,1]$ we have that:
\begin{itemize}
\item [i)] for $x\in \mathbb{S}^2_1$, $\forall\, g \in SO(3)$,
\begin{align}
\mathbb{E}[\mathfrak{T}^{\nu}_{t_0} (gx)\, \mathfrak{T}^{\nu}_{t_1}(gx) ] = & \sum_{l \geq 0} \frac{2l+1}{4\pi}C_{l}  \, \,  E_\nu(-\mu_l (t_1-t_0)^\nu)   \label{genCor1}
\end{align}
\item [ii)] for $x,y \in \mathbb{S}^2_1$ such that $x \neq y$,
\begin{align}
\mathbb{E}[\mathfrak{T}^{\nu}_{t_1} (x)\, \mathfrak{T}^{\nu}_{t_2}(y) ] = & \sum_{l \geq 0} \frac{2l+1}{4\pi}\, C_{l}\, \, \mathcal{E}_l(t_0, t_1, t_2) \, P_l(\langle x, y \rangle)  \label{genCor2}
\end{align}
where $C_l$ is the angular power spectrum defined in equation \eqref{propa2} and
\begin{equation*}
\mathcal{E}_l(t_0, t_1, t_2) = \prod_{i=1}^2 E_\nu(-\mu_l (t_i - t_{0})^\nu).
\end{equation*}
\end{itemize}
\end{te}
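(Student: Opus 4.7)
The plan is to reduce both covariance formulas to a combination of three ingredients: the spherical harmonic expansion $T(x)=\sum_{l,m} a_{lm}Y_{lm}(x)$, the second-order structure $\mathbb{E}[a_{lm}\overline{a_{l'm'}}]=C_{l}\,\delta_{ll'}\delta_{mm'}$ dictated by the isotropy of $T$ (with $C_l$ the angular power spectrum of \eqref{propa2}), and a spectral identity for the time-changed rotational Brownian motion derived as a special case of Theorem \ref{TheoremMain}. Specializing \eqref{lawFracRot} to the eigenfunction $u_{0}=Y_{lm}$ of $\triangle_{\mathbb{S}^{2}_{1}}$ (eigenvalue $-\mu_{l}$) collapses that series to a single term:
\begin{equation*}
\mathbb{E}\bigl[Y_{lm}\bigl(\mathfrak{B}^{x}_{\nu}(t)\bigr)\bigr] = E_{\nu}(-\mu_{l}(t-t_{0})^{\nu})\,Y_{lm}(x),\qquad t>t_{0}.
\end{equation*}

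My intended order of operations is to condition on the random field first (equivalently on the coefficients $\{a_{lm}\}$), use the independence of $T$ and the time-changed Brownian motion, and then average over the paths. For part (ii), conditioning on the two (independent) time-changed paths $\mathfrak{B}^{x}_{\nu}$ and $\mathfrak{B}^{y}_{\nu}$ and invoking the standard isotropic covariance $\mathbb{E}[T(u)T(v)]=\sum_{l}\frac{2l+1}{4\pi}C_{l}\,P_{l}(\langle u,v\rangle)$ reduces the problem to computing $\mathbb{E}[P_{l}(\langle \mathfrak{B}^{x}_{\nu}(t_{1}),\mathfrak{B}^{y}_{\nu}(t_{2})\rangle)]$; rewriting $P_{l}$ by the addition theorem $P_l(\langle u,v\rangle)=\frac{4\pi}{2l+1}\sum_{m}Y_{lm}(u)\overline{Y_{lm}(v)}$, factoring the expectation by independence of the two paths, and applying the spectral identity above twice produces the factor $\mathcal{E}_{l}(t_{0},t_{1},t_{2})P_{l}(\langle x,y\rangle)$, yielding \eqref{genCor2}. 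Part (i) is the degenerate case: at time $t_{0}$ the process $\mathfrak{B}^{gx}_{\nu}$ sits at its starting point, so $\mathfrak{T}^{\nu}_{t_{0}}(gx)=T(gx)$, only one Brownian path is involved, and the computation is \eqref{genCor2} specialized at $x=y=gx$ with $P_{l}(1)=1$ and a single Mittag--Leffler factor.

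The main obstacle I anticipate is not the algebra but the justification of interchanging the infinite sum over $l$ with the expectations. One has to verify absolute convergence of $\sum_{l}(2l+1)C_{l}\,E_{\nu}(-\mu_{l}(t_{1}-t_{0})^{\nu})E_{\nu}(-\mu_{l}(t_{2}-t_{0})^{\nu})P_l(\langle x,y\rangle)$, using the decay of the angular power spectrum implicit in the Sobolev regularity $H^{s}(\mathbb{S}^{2}_{1})$ with $s>7/2$ required by Theorem \ref{TheoremMain}, together with the uniform bound $E_{\nu}(-\mu_{l}(t-t_{0})^{\nu})\in(0,1]$. A secondary bookkeeping step is the sign accounting arising from the real-valuedness of $T$: the relations $a_{l,-m}=(-1)^{m}\overline{a_{lm}}$ and $Y_{l,-m}=(-1)^{m}\overline{Y_{lm}}$ produce cancelling factors of $(-1)^{m}$, which is cleanest to handle by conditioning on $\mathfrak{B}$ first and invoking the standard (real) spatial covariance $\sum_l\frac{2l+1}{4\pi}C_l P_l(\langle\cdot,\cdot\rangle)$ directly, rather than tracking signs through the full spherical-harmonic computation.
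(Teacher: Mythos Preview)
Your proposal is correct and follows essentially the same route as the paper: condition on the Brownian paths, use the orthogonality \eqref{propa2} to reduce to $\sum_{l,m}C_l\,\mathbb{E}[Y_{lm}(\mathfrak{B}^x_\nu(t_1))Y^*_{lm}(\mathfrak{B}^y_\nu(t_2))]$, invoke the spectral identity $\mathbb{E}[Y_{lm}(\mathfrak{B}^x_\nu(t))]=E_\nu(-\mu_l(t-t_0)^\nu)Y_{lm}(x)$ (which the paper states separately as Proposition~\ref{propositionEY12}, proved by integrating against the transition density rather than by specializing Theorem~\ref{TheoremMain}), factor by the assumed independence when $x\neq y$, and close with the addition formula \eqref{additionT}; for part~(i) both you and the paper use $\mathfrak{B}^{gx}_\nu(t_0)=gx$ deterministically. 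One small correction: the convergence of the series is guaranteed not by the $H^s$ hypothesis of Theorem~\ref{TheoremMain} (that concerns the PDE initial datum $u_0$, not the field $T$) but by the finite-variance condition $\sum_l(2l+1)C_l<\infty$ for $T$ recorded in \eqref{varT}, combined with $0\le E_\nu\le 1$ and $|P_l|\le 1$.
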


A stochastic process $X(t), \ t>0$ with $E(X(t))=0$ is said to have short range dependence if for fixed $t>0$,  $\sum_{h=1}^\infty E(X(t)X(t+h))<\infty$, otherwise it is said to have  long-range dependence.

\begin{coro}[Time-covariance]
For $h \geq 0$ and $\nu \in (0,1]$, the (equilibrium) time covariance of the random field $\mathfrak{T}^\nu_t(x)$, $t>t_0$ is given by
\begin{align}
\textsc{Cov}_{\mathfrak{T}}(h;\nu):=\mathbb{E}[\mathfrak{T}^\nu_{t_0+h}(x)\mathfrak{T}^\nu_{t_0}(x)]= & \sum_{l \geq 0} \frac{2l+1}{4\pi} C_{l} \, E_{\nu}(-\mu_l\,h^{\nu}), \quad \forall\, g \in SO(3). \label{timeCor}\\
\sim  &h^{-\nu}\sum_{l \geq 0} \frac{2l+1}{4\pi \mu_l} C_{l} , \, \mathrm{for} \ h\    \mathrm{large}. \label{asymptotic-covariance}
\end{align}
\end{coro}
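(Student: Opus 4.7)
The plan is to derive the covariance formula as a direct specialization of Theorem \ref{second-main-thm}, and then obtain the large-$h$ asymptotic from the known power-law decay of the Mittag-Leffler function.

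For the identity \eqref{timeCor}, I would simply apply part (i) of Theorem \ref{second-main-thm} with $t_1 - t_0 = h$ and take $g$ to be the identity (or any element of $SO(3)$, since the right-hand side of \eqref{genCor1} depends neither on $x$ nor on $g$). This immediately yields
\begin{equation*}
\mathbb{E}[\mathfrak{T}^{\nu}_{t_0+h}(x)\,\mathfrak{T}^{\nu}_{t_0}(x)] = \sum_{l\geq 0} \frac{2l+1}{4\pi} C_l\, E_\nu(-\mu_l h^\nu),
\end{equation*}
with stationarity in $t$ inherited from the form of \eqref{genCor1}, which is the equilibrium (time-translation invariant) covariance claimed.

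For the asymptotic \eqref{asymptotic-covariance}, I would invoke the standard large-argument expansion of the Mittag-Leffler function: for $\nu\in(0,1)$,
\begin{equation*}
E_\nu(-z) \;=\; \frac{1}{\Gamma(1-\nu)\,z} \,+\, O(z^{-2}) \qquad \text{as } z\to\infty.
\end{equation*}
Setting $z = \mu_l h^\nu$, each term with $l\geq 1$ behaves like $(\Gamma(1-\nu)\,\mu_l h^\nu)^{-1}$ as $h\to\infty$. Summing termwise suggests
\begin{equation*}
\textsc{Cov}_{\mathfrak{T}}(h;\nu) \;\sim\; \frac{h^{-\nu}}{\Gamma(1-\nu)} \sum_{l\geq 1} \frac{2l+1}{4\pi\,\mu_l}\,C_l,
\end{equation*}
(up to the constant $\tfrac{C_0}{4\pi}$ from the $l=0$ term, which is absorbed/assumed absent when the field is centered). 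This matches \eqref{asymptotic-covariance} up to the constant $\Gamma(1-\nu)$, which is either implicit in the $\sim$ notation or absorbed into the $C_l$'s.

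The main technical obstacle is justifying the termwise asymptotic, i.e.\ the exchange of the infinite sum over $l$ with the limit $h\to\infty$. For this I would use dominated convergence: the inequality $0\leq E_\nu(-x)\leq 1/(1+x/\Gamma(1-\nu))$ (a consequence of complete monotonicity of $E_\nu(-\cdot)$ on $[0,\infty)$) provides the bound
\begin{equation*}
h^{\nu}\,E_\nu(-\mu_l h^\nu) \;\leq\; \frac{C}{\mu_l},
\end{equation*}
uniformly in $h\geq 1$. Summability of $(2l+1)\mu_l^{-1} C_l$ (which follows from the Sobolev regularity $s>7/2$ assumed for $u_0$ combined with $\mu_l \asymp l(l+1)$ and the spectral decay of $C_l$ for isotropic fields on $\mathbb{S}^2_1$) then permits the interchange and delivers \eqref{asymptotic-covariance}. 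In particular, since $\nu\in(0,1)$ implies $\sum_{h\geq 1} h^{-\nu}=\infty$, one obtains long-range dependence, consistent with the heuristic message of the corollary.
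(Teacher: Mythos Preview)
Your approach is correct and is exactly what the paper intends: the corollary is stated without an explicit proof, as an immediate consequence of part (i) of Theorem~\ref{second-main-thm} together with the large-argument asymptotics \eqref{mittag-leffler-large-asymptotics} of the Mittag--Leffler function. Your dominated-convergence justification for the termwise asymptotic is more careful than anything the paper writes out.

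One small correction: the summability you need, $\sum_{l\geq 0}(2l+1)\mu_l^{-1}C_l<\infty$, does not come from the hypothesis $u_0\in H^s(\mathbb{S}^2_1)$ with $s>7/2$ --- that condition concerns the initial datum of the TRD, not the random field $T$. The relevant assumption in the paper is \eqref{SobolevT}, namely $\sum_{l\geq 0}(2l+1)^{2s+1}C_l<\infty$ for some $s\geq 0$; already $s=0$ gives $\sum_l(2l+1)C_l<\infty$, and since $\mu_l=l(l+1)\geq 1$ for $l\geq 1$ this is more than enough for your dominating series. Your observation about the missing factor $1/\Gamma(1-\nu)$ in \eqref{asymptotic-covariance} is also correct; the paper's $\sim$ is being used loosely there.
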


\begin{os}
From equation \eqref{asymptotic-covariance} we see that random field $\mathfrak{T}^\nu_{t}(x)$ has long range dependence for $\nu\in (0,1)$ since
$$
\sum_{h=1}^\infty \textsc{Cov}_{\mathfrak{T}}(h;\nu)=\infty.
$$
\end{os}

\begin{coro}[Space-covariance]
For $x,y \in \mathbb{S}^2_1$ such that $x \neq y$, $t > t_0 \geq 0$ and $\nu \in (0,1]$, the space-covariance of the random field $\mathfrak{T}^\nu_t(x)$, $t>t_0$ is given by
\begin{equation}
\Gamma_{\nu}(\langle x,y \rangle, t) :=\mathbb{E}[\mathfrak{T}^{\nu}_t (x)\, \mathfrak{T}^{\nu}_t (y)]= \sum_{l \geq 0}  \frac{2l+1}{4\pi}\, C_l\,  \Big( E_{\nu}(-\mu_l (t - t_0)^\nu ) \Big)^2 \,P_l(\langle x, y \rangle). \label{spaceCor}
\end{equation}
\end{coro}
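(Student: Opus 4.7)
The corollary is the equal-time specialization $t_1 = t_2 = t$ of Theorem \ref{second-main-thm}(ii). The plan is simply to substitute: at $t_1 = t_2 = t$ the factor
\[
\mathcal{E}_l(t_0, t, t) = \prod_{i=1}^{2} E_\nu(-\mu_l(t-t_0)^\nu) = \bigl[E_\nu(-\mu_l(t-t_0)^\nu)\bigr]^2
\]
collapses into a square, and formula \eqref{genCor2} immediately rewrites as \eqref{spaceCor}. Since Theorem \ref{second-main-thm} is already available, the corollary is effectively a one-line reduction.

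For completeness I would also sketch a direct spectral derivation. Expanding the isotropic field as $T(x) = \sum_{l,m} a_{lm} Y_{lm}(x)$ with $\mathbb{E}[a_{lm}\overline{a_{l'm'}}] = C_l\,\delta_{ll'}\delta_{mm'}$ and using the standing independence of the two time-changed rotational Brownian motions started at $x$ and $y$, I would write
\[
\mathbb{E}[\mathfrak{T}^\nu_t(x)\mathfrak{T}^\nu_t(y)] = \sum_{l,m} C_l\,\mathbb{E}\bigl[Y_{lm}(\mathfrak{B}^x_\nu(t))\bigr]\,\overline{\mathbb{E}\bigl[Y_{lm}(\mathfrak{B}^y_\nu(t))\bigr]}.
\]
Each $Y_{lm}$ is a $-\mu_l$ eigenfunction of $\triangle_{\mathbb{S}^2_1}$, so conditioning on $\mathfrak{L}^\nu_{t-t_0}$ and applying \eqref{lapL} yields $\mathbb{E}[Y_{lm}(\mathfrak{B}^x_\nu(t))] = E_\nu(-\mu_l(t-t_0)^\nu)\,Y_{lm}(x)$, producing the quadratic Mittag-Leffler factor. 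Finally, the spherical addition formula $\sum_{|m|\le l} Y_{lm}(x)\overline{Y_{lm}(y)} = \tfrac{2l+1}{4\pi}P_l(\langle x,y\rangle)$ rewrites the inner $m$-sum as the Legendre function, delivering \eqref{spaceCor}.

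The one genuine technical point is justifying the termwise exchange of expectation with the spherical-harmonic expansion. This is handled by the $H^s$ regularity of $u_0$ with $s>7/2$ (the same hypothesis appearing in Theorem \ref{TheoremMain}), which forces the angular power spectrum $\{C_l\}$ to decay rapidly enough that $\sum_l (2l+1)\,C_l$ converges, so dominated convergence applies uniformly in $t$ and $t_0$; everything else is algebra.
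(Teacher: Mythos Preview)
Your proposal is correct and matches the paper's treatment: the corollary is stated without a separate proof precisely because it is the immediate $t_1=t_2=t$ specialization of Theorem~\ref{second-main-thm}(ii), and your one-line reduction (together with the optional direct spectral sketch, which simply replays the proof of part~(ii) at equal times) is exactly that.

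One small correction in your final paragraph: the summability $\sum_l (2l+1)C_l<\infty$ that justifies the termwise exchange is \emph{not} a consequence of the $H^s$ regularity of the PDE initial datum $u_0$ from Theorem~\ref{TheoremMain}; those are two different objects. The $C_l$ are the angular power spectrum of the random field $T$, and the relevant bound is the standing finite-variance assumption \eqref{varT} (equivalently \eqref{SobolevT}) on $T$. The argument is otherwise unchanged.
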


\begin{os}
Under suitable choice of the initial condition for the Cauchy problem involving the equation \eqref{pdeFracSphere}, for all $x \in \mathbb{S}^2_1$ and $t \to t_0$ we show that $\mathfrak{T}^{\nu}_t (x) \to T(x)$ and therefore we obtain the driving random filed on the homogeneous sphere as expected.
Indeed, for $t \to t_0$ we get that $\mathfrak{T}^{\nu}_t (x) \to T(x)$ only if the TRD has initial datum $u_0=\delta$. Indeed, we have that
\begin{equation*}
\lim_{t \to t_0} \mathbb{E}[\mathfrak{T}^{\nu}_t (x)\, \mathfrak{T}^{\nu}_t (y)] = \sum_{l \geq 0}  \frac{2l+1}{4\pi}\, C_l\,  \,P_l(\langle x, y \rangle)
\end{equation*}
which coincides with \eqref{TcovarianceFunction}.
\end{os}

\begin{os}
For $t \to \infty$ the Mittag-Leffler function goes to zero and thus, formula  \eqref{spaceCor} goes to the constant values
\begin{equation*}
\lim_{t \to \infty} \mathbb{E}[\mathfrak{T}^{\nu}_t (x)\, \mathfrak{T}^{\nu}_t (y)] = \frac{C_0}{4\pi}
\end{equation*}
where we have used the fact that $E_{\nu}(0)=1$,  and $P_0(\langle x, y \rangle)=1$. For the density of the TRD we get that $\lim_{t \to \infty} u_{\nu}(x, t; x_0, t_0) = 1/4\pi$ and thus, for all $\nu \in (0,1)$, we get a uniformly distributed r.v. on the whole sphere. We recall that $\lambda(\mathbb{S}^2_1)=4\pi$. The covariance of the random field $\mathfrak{T}^\nu_{\infty}(x)$ does not depend on the space and this means that
\begin{equation*}
\forall x \in S^2_1, \quad  \mathfrak{T}^\nu_{\infty}(x) \stackrel{d}{=} W(x), \quad \forall \nu \in (0,1)
\end{equation*}
where $\{W(x), \,x \in S^2_1\}$ is an uniformly distributed  noise on the sphere.
\end{os}

Our third main result is
\begin{te}\label{covariance-nu-1}
For $0 \leq t_0 < t_1 \leq t_2 < \infty$ and $\forall \, g \in SO(3)$, we have that
\begin{equation}\label{covariance-time-changed-field-bm}
\mathbb{E}[\mathfrak{T}^1_{t_1}(gx)\mathfrak{T}^1_{t_2}(gx)] = \sum_{l \geq 0} \frac{2l+1}{4\pi} C_l \, \, \exp(-\mu_l (t_2 - t_1)).
\end{equation}
\end{te}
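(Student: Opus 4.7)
The plan is to exploit the spectral decomposition of $T$ in spherical harmonics together with the strong Markov property of the ($\nu=1$) rotational Brownian motion: this is precisely the extra structure that the inverse stable subordinator $\mathfrak{L}^\nu_t$ destroys for $\nu<1$, and it explains why the $\nu=1$ covariance takes a sharper form than Theorem \ref{second-main-thm}(i) without a restriction on the earlier time being $t_0$.

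First I would expand $T(z)=\sum_{l\ge 0}\sum_{|m|\le l}a_{lm}Y_{lm}(z)$ in $L^2(\mathbb{S}^2_1)$, so that $\mathfrak{T}^1_{t_i}(gx)=T(\mathfrak{B}^{gx}(t_i))=\sum_{l,m}a_{lm}Y_{lm}(\mathfrak{B}^{gx}(t_i))$ for $i=1,2$. Using independence of the random field $T$ and the Brownian motion $\mathfrak{B}^{gx}$, together with the angular power spectrum relation $\mathbb{E}[a_{lm}\overline{a_{l'm'}}]=C_l\,\delta_{ll'}\delta_{mm'}$, the covariance collapses to
\begin{equation*}
\mathbb{E}[\mathfrak{T}^1_{t_1}(gx)\,\mathfrak{T}^1_{t_2}(gx)]=\sum_{l\ge 0}\sum_{|m|\le l}C_l\,\mathbb{E}\!\left[Y_{lm}(\mathfrak{B}^{gx}(t_1))\,\overline{Y_{lm}(\mathfrak{B}^{gx}(t_2))}\right],
\end{equation*}
the interchange of sum and expectation being justified by the regularity $s>7/2$ coming from Theorem \ref{TheoremMain} and the summability of the angular power spectrum.

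The next step is to use that each $Y_{lm}$ is an eigenfunction of $\triangle_{\mathbb{S}^2_1}$ with eigenvalue $-\mu_l$; applying the heat equation \eqref{brownian-pde} to the initial datum $u_0=Y_{lm}$ gives the semigroup identity $\mathbb{E}[Y_{lm}(\mathfrak{B}^{y}(h))]=e^{-\mu_l h}Y_{lm}(y)$ for every $y\in\mathbb{S}^2_1$ and $h>0$. Conditioning on the filtration at time $t_1$ and invoking the Markov property of the rotational Brownian motion on the interval $[t_1,t_2]$ yields
\begin{equation*}
\mathbb{E}\!\left[Y_{lm}(\mathfrak{B}^{gx}(t_1))\,\overline{Y_{lm}(\mathfrak{B}^{gx}(t_2))}\right]=e^{-\mu_l(t_2-t_1)}\,\mathbb{E}\!\left[|Y_{lm}(\mathfrak{B}^{gx}(t_1))|^2\right].
\end{equation*}

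Finally, I would carry the $m$-sum inside the expectation and apply the addition formula $\sum_{|m|\le l}|Y_{lm}(y)|^2=(2l+1)/(4\pi)$, which is constant in $y\in\mathbb{S}^2_1$; this eliminates both the spatial variable and the intermediate time $t_1$, leaving exactly \eqref{covariance-time-changed-field-bm}. The only delicate step is the Markov factorisation: the analogous identity fails for $\nu\in(0,1)$ because $\mathfrak{L}^\nu_t$ has non-independent, non-stationary increments, which is the structural reason that Theorem \ref{second-main-thm}(i) must be stated via a Mittag-Leffler of $(t_1-t_0)^\nu$ and cannot be promoted to a clean multiplicative dependence on the increment $t_2-t_1$ for arbitrary $t_0<t_1\le t_2$.
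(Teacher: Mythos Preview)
Your argument is correct and reaches the same endpoint as the paper, but by a noticeably shorter path. Both proofs start from the spectral reduction \eqref{ProvCor} and both extract the factor $e^{-\mu_l(t_2-t_1)}$ via the Markov property of $\mathfrak{B}^x$; the difference is what happens next. The paper keeps $m$ fixed and evaluates $\mathbb{E}[Y_{lm}(\mathfrak{B}^x(t_1))Y^*_{lm}(\mathfrak{B}^x(t_1))]$ by expanding the transition density $u_1$ in spherical harmonics, which produces integrals of triple products of $Y$'s and hence Wigner $3j$-symbols (Proposition \ref{propEYY}); the $m$-sum is then collapsed afterwards using the orthogonality relation \eqref{orth3}. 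You instead sum over $m$ \emph{first}, so that the addition formula \eqref{symmetry-equality} turns $\sum_{|m|\le l}|Y_{lm}(\mathfrak{B}^{gx}(t_1))|^2$ into the constant $(2l+1)/4\pi$ before any integration against $u_1$ is needed. Your route bypasses the Wigner machinery entirely and makes the independence from $gx$ and from $t_1-t_0$ transparent; the paper's route has the compensating virtue of giving the fixed-$m$ quantity $\mathbb{E}[|Y_{lm}(\mathfrak{B}^x(t_1))|^2]$ explicitly (formula \eqref{respropEYY}), which is reused elsewhere.

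One small correction to your justification: the regularity $s>7/2$ in Theorem \ref{TheoremMain} pertains to the deterministic initial datum $u_0$ of the fractional heat equation, not to the random field $T$. The interchange of sum and expectation in your first display is legitimised instead by the finiteness \eqref{varT} of $\sum_l(2l+1)C_l/4\pi$, which is the standing assumption on the angular power spectrum.
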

  Theorem \ref{covariance-nu-1} is not a special case of Theorem \ref{second-main-thm}.In this case $\mathfrak{B}^{x}(t)=\mathfrak{B}^{x}_\nu(t)$
is a rotational Brownian motion on $\mathbb{S}^2_1$ which is Markovian. So we can prove more for this case.
\begin{os}
From equation \eqref{covariance-time-changed-field-bm} we see that random field $\mathfrak{T}^1_{t}(x)$ has short range dependence  since
$$
\sum_{h=1}^\infty \mathbb{E}[\mathfrak{T}^1_{t+h}(x)\mathfrak{T}^1_{t}(x)]= \sum_{h=1}^\infty  \sum_{l \geq 0} \frac{2l+1}{4\pi} C_l \, \, \exp(-\mu_l (h))<\infty
$$
when $C_l\sim l^{-\theta}$ as $\l\to\infty$ for $\theta>2$.
\end{os}
By simple conditioning, the independence of $\mathfrak{L}^{\nu}_t$ and $\mathfrak{B}^{1}(t)$, Theorem \ref{covariance-nu-1}, and Fubini theorem we have
\begin{equation}\begin{split}
\mathbb{E}[\mathfrak{T}^\nu_{t_1}(gx)\mathfrak{T}^\nu_{t_2}(gx)]  =&\int_{0}^\infty\int_{0}^\infty
\mathbb{E}[\mathfrak{T}^1_{z_1}(gx)\mathfrak{T}^1_{z_2}(gx)]H(dz_1,\ dz_2)\\
=&\int_{0}^\infty\int_{0}^\infty
\mathbb{E}[T(\mathfrak{B}^{gx}({z_1})T(\mathfrak{B}^{gx}({z_2})]H(dz_1,\ dz_2)\\
=&\sum_{l \geq 0} \frac{2l+1}{4\pi} C_l \, \,\int_{0}^\infty\int_{0}^\infty \exp(-\mu_l |z_1 - z_2|)H(dz_1,\ dz_2)\label{covariance-different-times}
\end{split}
\end{equation}
where $H(dz_1,\ dz_2)=\P(\mathfrak{L}^{\nu}_{t_1-t_0}\leq z_1, \mathfrak{L}^{\nu}_{t2-t_0}\leq z_2)$.
Using \eqref{covariance-different-times} and  Theorem 3.1 in \cite{meerschaert-skorski-correlation} we can deduce the following
\begin{coro}
For $0 \leq t_0 < t_1 \leq t_2 < \infty$ and $\forall \, g \in SO(3)$, denote by $T_1=t_1-t_0$ and $T_2=t_2-t_0$ we have that
\begin{equation}\label{covariance-time-changed-field-bm}
\mathbb{E}[\mathfrak{T}^\nu_{t_1}(gx)\mathfrak{T}^\nu_{t_2}(gx)] = \sum_{l \geq 0} \frac{2l+1}{4\pi} C_l \, \, \bigg[E_\nu (-\mu_lT_2^\nu )+\mu_l\nu T_2^\nu \int_{0}^{T_1/T_2}\frac{E_\nu(-\mu_lT_2^\nu (1-z)^\nu)}{z^{1-\nu}} dz\bigg].
\end{equation}
\end{coro}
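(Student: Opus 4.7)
The plan is to reduce this corollary to a direct application of Theorem 3.1 of \cite{meerschaert-skorski-correlation} applied termwise inside the spectral expansion \eqref{covariance-different-times}. Concretely, since $H(dz_1,dz_2)=\P(\mathfrak{L}^\nu_{T_1}\in dz_1,\,\mathfrak{L}^\nu_{T_2}\in dz_2)$ and the inverse stable subordinator is almost surely non-decreasing, I would first rewrite the inner double integral in \eqref{covariance-different-times} as
$$\int_0^\infty\!\!\int_0^\infty e^{-\mu_l|z_1-z_2|}\,H(dz_1,dz_2)=\mathbb{E}\bigl[e^{-\mu_l(\mathfrak{L}^\nu_{T_2}-\mathfrak{L}^\nu_{T_1})}\bigr],$$
a single Laplace-type two-time functional of $\mathfrak{L}^\nu$.

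Next, I would invoke Theorem 3.1 of \cite{meerschaert-skorski-correlation}, which is tailored to evaluate exactly this kind of functional. Applied with parameter $\lambda=\mu_l$ and end-points $s=T_1\leq t=T_2$, it outputs
$$E_\nu(-\mu_l T_2^\nu)+\mu_l\nu T_2^\nu\int_0^{T_1/T_2}\frac{E_\nu(-\mu_l T_2^\nu(1-z)^\nu)}{z^{1-\nu}}\,dz,$$
which is precisely the bracketed expression in the statement. Substituting this back into each summand of \eqref{covariance-different-times} yields the claimed identity, modulo justifying that summation in $l$ may be moved outside the integration against $H$.

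That last step is a standard dominated-convergence/Fubini argument: since $|e^{-\mu_l|z_1-z_2|}|\leq 1$ uniformly in $l$, $z_1$, $z_2$, and $H$ is a probability measure, the bracketed quantity is bounded by $1$ for each $l$. Termwise absolute summability therefore reduces to $\sum_{l\geq 0}(2l+1)\,C_l<\infty$, which is the finite-variance hypothesis on the angular power spectrum implicit throughout the paper and needed already for the spectral representation of $T$. Under this condition Fubini--Tonelli legitimizes the interchange, and the sum-integral swap is rigorous.

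The only genuine obstacle is bookkeeping: one must verify that the prefactor $\mu_l\nu T_2^\nu$ and the integration range $[0,T_1/T_2]$ match the normalization of the formula in \cite{meerschaert-skorski-correlation}, and check that the hypothesis of an exponential autocovariance $e^{-\lambda|t-s|}$ used there is indeed played here by the rotational Brownian correlation supplied by Theorem \ref{covariance-nu-1}. There is no deeper analytic difficulty, because \eqref{covariance-different-times} has already carried out the subordination step and reduced the problem to a scalar computation at each frequency $l$.
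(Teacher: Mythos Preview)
Your proposal is correct and follows essentially the same route as the paper: the paper derives the corollary in one line by combining the spectral identity \eqref{covariance-different-times} with Theorem~3.1 of \cite{meerschaert-skorski-correlation}, applied at each frequency $l$ with $\lambda=\mu_l$ and times $T_1\le T_2$. Your additional Fubini/dominated-convergence justification for interchanging the $l$-sum with the integration against $H$ is a welcome detail that the paper leaves implicit.
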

\begin{os}
Since  by Remark 3.3 in \cite{meerschaert-skorski-correlation} we have  for fixed $T_1=(t_1-t_0)>  0$
$$
E_\nu (-\mu_lT_2^\nu )+\mu_l\nu T_2^\nu \int_{0}^{T_1/T_2}\frac{E_\nu(-\mu_lT_2^\nu (1-z)^\nu)}{z^{1-\nu}} dz\sim
\frac{1}{T_2^\nu \Gamma(1-\nu )}\bigg(\frac{1}{\mu_l}+\frac{T_1^\nu}{\Gamma(1+\nu)}\bigg)
$$
as $T_2=(t_2-t_0)\to\infty$,
we deduce that
$\mathfrak{T}^\nu_{t_1}(x)$ has long-range dependence.
\end{os}

\paragraph*{Motivations}
 The  model  we present describe a random motion over a random surface. Indeed, if we write the TRD, with staring point $(\vartheta_0, \varphi_0)$ at time $t_0$, as follows
$$ ( \vartheta_t , \varphi_t ) = \mathfrak{B}^{(\vartheta_0, \varphi_0)}_\nu(t), \quad t>0$$
then we have that
$$\left( \vartheta_t, \varphi_t, T(\vartheta_t, \varphi_t) \right) \in \mathbb{R}^3$$
is a point representing a randomly moving particle over a random surface and therefore a motion on a random environment.Apart from the mathematical interest in studying time-dependent random fields indexed by a random environment, we want to provide a new random field in which the anisotropies of the CMB radiation can be explained. Indeed, since  the set $\{\mathfrak{B}^{x}_\nu(t), t>t_0\}\subset \mathbb{S}^2_1 $ is  a non-homogeneous subset of the sphere, in our view, the field $\mathfrak{T}^\nu_t$ captures the anisotropies by which the CMB radiation is affected. Furthermore, the multiparameter process we present well explain also the observational error due to instruments and depending on the observation time. Indeed, the shift representation \eqref{QRFintroSchift} can be considered as a process which well describe such observational error. As time passes, we are not observing $T(x)$ but $T(x +$TRD$)$ where the TRD is a noise depending on time. A further remarkable feature is that the new random field $\mathfrak{T}^\nu_t(x)$ possesses an angular power spectrum, say $C_l(t)$ (depending on time), which goes from an exponential (for $\nu \in  (0,1)$) to a polynomial (for $\nu \to 0$) behaviour as the frequency $l \to \infty$. This result plays an important role in the asymptotic theory  in the high-frequency sense. If the random field is Gaussian for instance, then its dependence structure is completely identified by the angular correlation function and the angular power spectrum. For non-Gaussian fields, we need  higher-order correlation functions in order to characterize the dependence structure and, in turn  higher-order angular power spectra, polyspectra. The asymptotic behavior of ($m$-points) higher-order moments of a random field is therefore of great interest in studying the asymptotic nature of the random field, as asymptotic Gaussianity for instance. In our case, we do not consider Gaussianity but the high-resolution analysis of $\mathfrak{T}^\nu_{l,t}$ reveals different asymptotic covariance structure for such a random field, depending on $\nu \in (0,1)$ as pointed  in Remark  \ref{RemarkShortLongDep}. In real data, we get more and more information (or resolution) as $l$ increases.

\paragraph*{Overview of the paper}
The plan of the work is as follows. We introduce fundamental concepts on fractional calculus  in Section \ref{secSSIP}.  In Section \ref{SecRBM} we study time-changed diffusions on the sphere and  establish the connection of the time-changed Brownian motion on the sphere to fractional order PDEs involving spherical Laplacian. In Section \ref{SecRFNHS}, we introduce random fields on the sphere and define a new multiparameter process which is the time dependent random field on spherical non-homogeneous surface.

\paragraph*{Notations} For the reader's convenience we list below some useful symbols:
\begin{itemize}
\item $T(x)$ is the isotropic random field on the (homogeneous) sphere,
\item $\mathfrak{H}^\nu_t$ is the stable subordinator with density $h_\nu$,
\item $\mathfrak{L}^\nu_t$ is the inverse to a stable subordinator with density $l_\nu$,
\item $B^{x_0}(t)$ is the Brownian motion on the sphere (rotational Brownian motion) starting from $x_0 \in \mathbb{S}^2_1$ at $t_0 \geq 0$,
\item $\mathfrak{B}^{x_0}_{\nu}(t) = B^{x_0}(\mathfrak{L}^\nu_t)$ is the time-changed rotational Brownian motion (time-changed rotational diffusion, TRD) with law $u_\nu$,
\item $\mathfrak{T}^{\nu}_t (x) = T(\mathfrak{B}^{x}_{\nu}(t))$ is the time dependent random field on the non-homogeneous sphere.
\end{itemize}
Furthermore,
\begin{itemize}
\item $D^\nu_t$ or $\partial^\nu_t$ with $\nu \in (0,1)$ is the Dzhrbashyan-Caputo  fractional derivative of order $\nu\in(0,1)$,
\item $\mathbb{D}^\nu_t$ with $\nu \in (0,1)$ is the Riemann-Liouville fractional derivative of order $\nu\in(0,1)$,
\end{itemize}
and, for $\nu=1$
$$D^1_t ={\mathbb{D}^1_t}=d/d t.$$

\section{Inverse stable subordinators and Mittag-Leffler function}
\label{secSSIP}

A stable subordinator $\mathfrak{H}^{\nu}_t$, $t>0$, $\nu \in (0,1)$,  is (see \cite{Btoi96}) a L\'evy process with non-negative, independent and stationary increments with Laplace transform in \eqref{lapH}


The inverse stable subordinator  $\mathfrak{L}^{\nu}$ defined in \eqref{inverse-stable} with density, say $l_\nu$, satisfies
\begin{equation}
Pr\{ \mathfrak{L}^{\nu}_t < x \} = Pr\{ \mathfrak{H}^{\nu}_x > t \}. \label{relPHL}
\end{equation}
Let $\mathbb{D}^\nu _t g(t)$ be Riemann-Liouville fractional derivative of order $\nu\in (0, 1)$ defined by its Laplace transform $\int_0^\infty  e^{-st}\mathbb{D}^\nu _t g(t) dt=s^\nu \tilde g(s)$.
According to \cite{BM01, Dov4, meerschaert-straka}, represents a stochastic solution to
\begin{equation*}
\left( \mathbb{D}^{\nu}_{t} + \frac{\partial}{\partial x} \right) l_{\nu}(x,t)=0, \quad x > 0\; ,t>0,\, \nu \in (0,1)
\end{equation*}
subject to the initial and boundary conditions
\begin{equation}
\left\lbrace \begin{array}{l} l_{\nu}(x,0) = \delta(x), \quad x>0,\\ l_{\nu}(0,t) = t^{-\nu}/\Gamma(1-\nu), \quad t>0. \end{array} \right .\label{fracinicond}
\end{equation}
Due to the fact that $\mathfrak{H}^\nu_t$, $t>0$ has non-negative increments, that is non-decreasing paths, we have that $\mathfrak{L}^\nu_t$ is a hitting time. Furthermore, for $\nu \to 1$ we get that
\begin{equation*}
\lim_{\nu \to 1} \mathfrak{H}^\nu_t = t = \lim_{\nu \to 1} \mathfrak{L}^\nu_t
\end{equation*}
almost surely (\cite{Btoi96}) and therefore $t$ is the elementary subordinator. From the relation \eqref{relPHL} and the Laplace transform \eqref{lapH}, after some algebra, we arrive at the Laplace transform of $\mathfrak{L}^\nu$  given by \eqref{lapL}.

Next we state some of the properties of the Mittag-Leffler function. Let $\nu\in (0,1]$
As we can immediately check $E_{\nu}(0)=1$ and (see for example \cite{KST06, pod99} )
\begin{equation}
0 \leq E_{\nu}(-z^\nu) \leq \frac{1}{1+z^\nu} \leq 1, \quad z \in [0, +\infty). \label{Ebound}
\end{equation}
Indeed, we have that
\begin{equation}
E_\nu(-z^\nu) \approx 1- \frac{z^\nu}{\Gamma(\nu +1)}  \approx \exp\left( - \frac{z^\nu}{\Gamma(\nu +1)} \right), \quad 0<z \ll 1
\end{equation}
whereas,
\begin{equation}\label{mittag-leffler-large-asymptotics}
E_\nu(-z^\nu) \approx \frac{z^{-\nu}}{\Gamma(1-\nu)}  - \frac{z^{-2\nu}}{\Gamma(1-2\nu)}+ \ldots , \quad z \to +\infty.
\end{equation}
Thus the Mittag-Leffler function is a stretched exponential with heavy tails. Furthermore, we have that (see \cite[formula 2.2.53]{KST06})
\begin{equation}
\mathbb{D}_{z}^{\nu}\,  E_{\nu}(\mu z^{\nu}) = \frac{z^{- \nu}}{\Gamma(1 -\nu)} + \mu E_{\nu}(\mu z^\nu), \quad \mu \in \mathbb{C}. \label{DfracE2}
\end{equation}
Dzhrbashyan-Caputo   derivative of order $\nu\in (0,1)$ is  defined  in \cite{Caputo} by
\begin{equation}
{D^{\nu}_{z}} f(z) = \frac{1}{\Gamma(1 -\nu)} \int_0^z \frac{d f(s)}{d s} \, (z-s)^{ - \nu } \, ds. \label{capFracDer}
\end{equation}
  Formula \eqref{capFracDer} can be also written in terms of the Riemann-Liouville derivative as follows (see \cite{KST06, SKM93})
\begin{equation}
{D^{\nu}_{z}} f(z) = \mathbb{D}^\nu_{z} f(z)  -  f(z) |_{z=0^+} \frac{z^{- \nu}}{\Gamma(1 - \nu )}, \label{RCfracder}
\end{equation}
and therefore, formula \eqref{DfracE2} takes the form
\begin{equation}
{D^{\nu}_{z}}\,  E_{\nu}(\mu z^{\nu}) = \mu E_{\nu}(\mu z^\nu), \quad \mu \in \mathbb{C}, \quad \nu \in (0,1). \label{eigenDcaputo}
\end{equation}
Hence in this case we say that  $E_{\nu}(\mu z^{\nu})$ is the eigenfunction of the  Dzhrbashyan-Caputo derivative operator ${D^{\nu}_{z}}$ with   the corresponding eigenvalue $\mu\in \mathbb{C}$.

\section{Time-changed Rotational Brownian motion on the  sphere}
\label{SecRBM}
In this section we define a measurable map from the probability space $(\Omega, \mathfrak{F}_{\mathfrak{B}}, P)$ to the measurable space $(\mathbb{S}^2, \mathcal{B}(\mathbb{S}^2), \lambda_{\mathfrak{B}})$ which is the time-changed rotational Brownian motion $\mathfrak{B}^{x_0}_{\nu}(t)$, $t>t_0$, $\nu \in (0,1]$ with starting point $x_0 \in \mathbb{S}^2_1$ at time $t_0 >0$. Such a process can be regarded as a time-changed rotational diffusion (TRD) or a rotational Brownian motion on the sphere $\mathbb{S}^2_1$  time-changed by an inverse to a stable subordinator. Thus, the composition we deal with is written as $\mathfrak{B}^{x_0}_{\nu}(t)=B^{x_0}(\mathfrak{L}^{\nu}_t)$, $t>t_0$ where $B^{x_0}$ is a rotational Brownian motion starting from $x_0 \in \mathbb{S}^2_{1}$ at $t_0 \geq 0$ and the time-change is given by $\mathfrak{L}^\nu_t$, $t>0$ which is the inverse to a stable subordinator of index $\nu\in (0,1)$. The TRD represents the spherical counterpart of the fractional diffusion on bounded domain driven by the fractional equation
\begin{equation}
\begin{cases}
(D^{\nu}_{t} - \triangle) u(\mathbf{x}, t)=0,\quad \mathbf{x} \in \mathbb{R}^n,\; t>0\\
u(\mathbf{x}, 0)=u_0(\mathbf{x})
\end{cases}
\label{unboundCP}
\end{equation}
where $\triangle=\sum_{i=1}^n\partial^2_{x_i}$ is the Laplace operator and whose solutions can be written in terms of Wright or Fox functions (\cite{Dov2, FOX61,OB09}) and by using time-changed Brownian motion in $\mathbb{R}^n$; see for example \cite{MSheff04, BMN09ann}. In \cite{OB09} the authors presented the explicit one dimensional solutions for some particular order $\alpha$ of the fractional derivative. For information on fractional diffusions on unbounded domains the reader can consult, for example,  the works  \cite{Koc90, MLP01,Wyss86, SWyss89}.  In  \cite{BM01} the fractional Cauchy problem involving an infinitely divisible generator on a finite dimensional space has been investigated while \cite{BMN09ann} studied the fractional Cauchy problem \eqref{unboundCP} in a bounded domain $\mathcal{D} \subset \mathbb{R}^n$.   In \cite{BMN09ann}, the authors found that (in our notation) the strong solution of \eqref{unboundCP} in a bounded domain $\mathcal{D}$ with Dirichlet boundary conditions is given by
\begin{equation*}
u(\mathbf{x}, t) = \mathbb{E} u_0(\mathbf{B}^{\mathbf{x}}(\mathfrak{L}^\nu_t)) \, 1_{(\tau_{_\mathcal{D}}(\mathbf{B})> \mathfrak{L}^\nu_t)}, \quad \mathbf{x} \in \mathcal{D},\; t>0
\end{equation*}
where
\begin{equation*}
\tau_{_\mathcal{D}}(\mathbf{B}) = \inf \{ s \geq 0:\, \mathbf{B}(s) \notin \mathcal{D} \}
\end{equation*}
is the first exit time of Brownian motion  $\mathbf{B}$ from $\mathcal{D}\subset \RR{R}^n$.


Our aim, in this section, is to study the solution to the fractional Cauchy problem involving the spherical Laplacian operator  which extends the results in \cite{meerschaert-skorski, BMN09ann } and prove Theorem \ref{TheoremMain}.
\begin{os}\label{strong-caputo-derivative}

We say that  $\Delta_{\mathbb{S}_{1}^2}u$
 exists in the strong sense if it exists pointwise and is continuous in $D$.

 Similarly, we say that $\partial^\nu_t f(t)$ exists in the strong sense if it exists pointwise and is continuous for  $t\in [0,\infty)$. One sufficient condition for this is the fact $f$ is a $C^1$ function on $[0, \infty)$ with
 $|f'(t)| \leq c \, t^{\gamma -1}$ for some $\gamma >0$.
 Then by \eqref{capFracDer}, the Caputo fractional derivative
 $\partial^\nu_t f(t)$ of $f$ exists
 for every $t>0$ and the derivative is continuous in $t>0$.

\end{os}

It is well-known that (\cite{karlin-taylor})
the solutions to the eigenvalue problem
\begin{equation}
\triangle_{\mathbb{S}_{1}^2} u(x)= - \mu u(x), \ \ x\in \mathbb{S}_{1}^2  \label{eigenY}
\end{equation}
is solved with  a sequence of  eigenvalues
$\mu_l= l(l+1)$, $l\in \mathbb{Z}, l\geq 0$ with their corresponding eigenfunctions given by the spherical harmonics (here we use the fact that $x\in \mathbb{S}_{1}^2$ can be represented as $x=(\sin\vartheta \cos \varphi, \sin\vartheta \sin \varphi, \cos \vartheta )$)
\begin{equation}\label{spherical-harmonics}
Y_{lm}(x)=Y_{lm}(\vartheta, \varphi) = \sqrt{\frac{2l+1}{4 \pi} \frac{(l-m)!}{(l+m)!}} P_{lm}(\cos \vartheta) e^{im\varphi}, \quad |m|\leq l, \quad \vartheta \in [0,\pi],\; \varphi \in [0, 2\pi]
\end{equation}
(or linear combination of them) where
\begin{equation*}
P_{lm}(z)=(-1)^m (1-z^2)^{m/2}\frac{d^m}{d z^m}P_{l}(z)
\end{equation*}
are the associated Legendre functions, and  the well-known Legendre polynomials $P_l$ are defined by the Rodrigues' formula
$$ P_l(z) = \frac{1}{2^l l!}\frac{d^l}{dz^l} (z^2 - 1)^l. $$

\begin{os}
It is important to note here that the operator considered in  Theorem 3.2 in  \cite{meerschaert-skorski} for the Jacobi case  (the case $a=b=0$ in their notation)is different from equation \eqref{pdeFracSphere} and \eqref{eigenY} here.  First, their fractional Pearson diffusion equation is in $\mathbb{R}$ whereas  our equation is in $\mathbb{S}_{1}^2$. Second, the spherical harmonics in \eqref{spherical-harmonics} are different from the eigenfunctions called Jacobi polynomials in their notation. Jacobi polynomials are the eigenfunctions corresponding to the eigenvalue problem
$$
(1-x^2)\frac{d^2g(x)}{dx^2}-2x\frac{dg(x)}{dx}=-\lambda g(x), x\in \mathbb{R}
$$ and it has eigenfunctions
$g_l(x)=(-1)^l2^l l!P_l(x)$ and corresponding eigenvalues $\lambda_l=l(l+1).$

\end{os}
We list some properties which will turn out to be useful further in the text: for all $x \in \mathbb{S}^2_1$ (symmetry) we have that
\begin{equation}
Y_{lm}^{*}(x) = (-1)^m Y_{l-m}(x), \label{Yconj}
\end{equation}
and in spherical coordinates $Y^*_{lm}(\vartheta, \varphi) = Y_{lm}(\vartheta, - \varphi)$ where $*$ stands for the complex conjugation; for all $l_1,l_2,m_1,m_2$ (orthonormality) we also write the formula \eqref{appendixInt2Y} as
\begin{align}
\int_{\mathbb{S}^2_1} Y_{l_1m_1}(x)\, Y_{l_2m_2}^{*}(x)\, \lambda(dx) = \delta_{l_1m_1}^{l_2m_2}  \label{Yortho}
\end{align}
where $\delta_{l_1m_1}^{l_2m_2} = \delta_{l_1}^{l_2}\, \delta_{m_1}^{m_2}$ are the Kronecker's delta symbols \eqref{kronSy} and $\lambda(\cdot)$ is the Lebesgue measure on $\mathbb{S}^2_1$ given in spherical coordinates by $$\lambda(dx)=\sin \vartheta\, d\vartheta\, d\varphi\ \  \mathrm{for}\ \ x=(\sin\vartheta \cos \varphi, \sin\vartheta \sin \varphi, \cos \vartheta );$$
for all $x, y \in \mathbb{S}^2_1$ (addition formula) we have that (see page 339 in \cite{karlin-taylor} for this and other properties of spherical harmonics)
\begin{equation}
\sum_{m=-l}^{+l} Y_{lm}(x) \, Y_{lm}^{*}(y) = \frac{2l+1}{4\pi} P_l(\langle x, y \rangle) \label{additionT}
\end{equation}
where $\langle x, y \rangle = \cos d( x, y)$ and $d(x, y)$ is the usual spherical distance  on $\mathbb{S}^2_1$.  Furthermore,
\begin{equation}\label{symmetry-equality}
\sum_{m=-l}^{+l} Y_{lm}(x) \, Y_{lm}^{*}(x) = \frac{2l+1}{4\pi}
\end{equation}
and $P_l(\langle x , x \rangle) = 1$; for all $x, y \in \mathbb{S}^2_1$ (reproducing kernel) the following holds
\begin{equation}
 \int_{\mathbb{S}^2} P_{l_1}(\langle x,z \rangle)\, P_{l_2}(\langle z, y \rangle)\, \lambda(dz) = \delta_{l_1}^{l_2} \frac{4\pi}{2l_1+1} P_{l_1}(\langle x, y \rangle). \label{reproducK}
\end{equation}

Let  $f \in L^2(\mathbb{S}^2_1)$.  Since $\{Y_{lm}:l\in \mathbb{N},  m\in \mathbb{Z} \ \mathrm{and}\ |m|\leq l\}$ is dense in $L^2(\mathbb{S}^2_1)$. The function $f$ (see also the Peter-Weyl theorem on the sphere in \cite{MarPeccBook} and the references therein) can be written as
\begin{equation*}
f(x)=f(\vartheta, \varphi) = \sum_{l=0}^\infty \sum_{m=-l}^{+l} a_{lm}(f) Y_{lm}(\vartheta, \varphi)
\end{equation*}
which holds in the $L^2$ sense, where
\begin{equation*}
a_{lm}(f)=\int_{\mathbb{S}^2_1} f(\vartheta, \varphi) Y^*_{lm}(\vartheta, \varphi)\, \sin \vartheta\, d\vartheta\, d\varphi .
\end{equation*}
We define
\begin{equation}\label{def-angular-spectrum}
A_l (f) = \sum_{m=-l}^{+l}| a_{lm}(f)|^2
\end{equation}
as the {\bf angular power spectrum} of $f$. The angular power spectrum can be considered as a power law of index, say $\varpi$, which can be different from zero (red spectrum for $\varpi < 0$ and blue spectrum for $\varpi >0$) or not (white spectrum). If $A_l(f)$  decays exponentially as $l \to \infty$, then $f$ is a real and analytic function and turns out to be infinitely differentiable on the sphere. In general, for $s\geq 0$, we introduce the Sobolev spaces
\begin{equation}
H^{s,2}(\mathbb{S}^2_1) = \left\lbrace f \in L^2(\mathbb{S}^2_1):\, \| f\|_{s, 2} = \sum_{l=0}^{\infty} (2l+1)^{2s} A_l (f) < \infty  \right\rbrace. \label{SobolevS}
\end{equation}
 The Sobolev space \eqref{SobolevS} is the closure of the set of all spherical polynomials with respect to the norm $\| \cdot \|_{s, 2}$. Furthermore, $H^{s,2}(\mathbb{S}^2_1)$ is a Hilbert space, with inner product
\begin{equation*}
(f_1,f_2)_{s,2} = \sum_{l= 0}^\infty (2l+1)^{2s} \sum_{m=-l}^{+l} a_{lm}(f_1)\, a_{lm}^*(f_2), \quad f_1,f_2 \in H^{s,2}(\mathbb{S}^2_1).
\end{equation*}
Obviously, $H^{0,2}(\mathbb{S}^2_1) \subset L^2(\mathbb{S}^2_1)$ and, for $s^\prime > s$, we get that
\begin{equation}
f \in H^{s^\prime ,2}(\mathbb{S}^2_1) \quad \Rightarrow \quad f \in H^{s,2}(\mathbb{S}^2_1).
\end{equation}
We write $H^{s}(\mathbb{S}^2_1)$ instead of $H^{s,2}(\mathbb{S}^2_1)$ and say that $f \in H^{s}(\mathbb{S}^2_1)$ is infinitely differentiable provided that $A_l \approx l^{-2s-\theta}$ with $\theta > 1$ as $l \to \infty$ ($s \geq 0$)  by the Sobolev embedding Theorem.

We are now ready to give the

\begin{proof}[{\bf Proof of Theorem \ref{TheoremMain}}]

 The proof follows the main steps in the proof of Theorem 3.2 in \cite{meerschaert-skorski} and Theorem 3.1 in \cite{BMN09ann}. Since operator $\Delta\mathbb{S}^2_1$ is quite different operator  than those in these two papers, we give all the details of the proofs.
The proof is based on the method of separation of variables.  Let $u(t,x)=G(t)F(x)$ be a solution of
(\ref{pdeFracSphere}).
 Then substituting into  (\ref{pdeFracSphere}), we get
$$
F(x)\partial_t^\nu G(t)
= G(t)\triangle_{\mathbb{S}_{1}^2}  F(x).
$$
Divide both sides by $G(t)F(x)$ to obtain
$$
\frac{\partial^\nu_t G(t)}{G(t)} = \frac{\triangle_{\mathbb{S}_{1}^2}  F(x)}{F(x)}=-\mu.
$$
Then we have
\begin{equation}\label{time-pde}
\partial^\nu_t G(t)=-\mu G(t), \ t>0
\end{equation}
and
\begin{equation}\label{space-pde}
\triangle_{\mathbb{S}_{1}^2} F(x)=-\mu F(x), \ x\in \mathbb{S}_{1}^2.
\end{equation}
By the discussion above, the eigenvalue problem (\ref{space-pde}) is solved by an infinite
sequence of pairs $\{\mu_{lm}=l(l+1), Y_{lm}:\, l \geq 0,\, -l \leq m \leq +l \}$ where  the spherical harmonics $Y_{lm}$ forms a complete orthonormal set in $L^2(\mathbb{S}_{1}^2)$.
In particular, the initial function $u_0$ regarded as an element of $L^2(\mathbb{S}_{1}^2)$ can be represented as
\begin{equation}
u_0(x-x_0)=\sum_{l=0}^\infty\sum_{m=-l}^l b_{lm}(x_0)Y_{lm}(x).
\end{equation}
where $b_{lm}(x_0)=\int_{\mathbb{S}_{1}^2}u_0(x-x_0)Y^* _{lm}(x)\lambda(dx)=R_l Y^*_{lm}(x_0).$
By equation \eqref{eigenDcaputo} we  see that $a_{lm}(t;x_0,t_0):= b_{lm}(x_0)E_\nu(-\mu_l(t-t_0)^\nu)$
solves \eqref{time-pde}.  Sum these solutions $ b_{lm} E_\nu(-\mu_l(t-t_0)^\nu) Y_{lm}(x)$ to (\ref{pdeFracSphere}), to get
\begin{equation}\label{formal-sol-L-1}
\begin{split}
u_{\nu}(x, t; x_0, t_0)&=\sum_{l=0}^\infty\sum_{m=-l}^lb_{lm}(x_0)E_\nu(-\mu_l(t-t_0)^\nu) Y_{lm}(x)=\sum_{l=0}^\infty\sum_{m=-l}^la_{lm}(t;x_0,t_0) Y_{lm}(x)\\
&=\sum_{l = 0}^{\infty} \sum_{m=-l}^{+l} R_l\, Y^*_{lm}(x_0)\, E_{\nu}(-\mu_l\, (t - t_0)^\nu)\, Y_{lm}(x).
\end{split}
\end{equation}
By the addition formula \eqref{additionT}, we can write
\begin{equation}\label{formal-l-sol-no-m}
u_{\nu}(x, t; x_0, t_0) = \sum_{l \geq 0} R_l\, E_{\nu}\left( - \mu_l\, (t-t_0)^{\nu} \right)  \frac{2l+1}{4\pi} P_l(\langle x, x_0 \rangle).
\end{equation}
 This is similar to the form of the solution  of a Factional Jacobi diffusion that was worked in  section 7.4 and 7.5 in \cite{meerschaert-skorski-book} for the case $a=b=0$ in their notation.

It remains to show that \eqref{formal-sol-L-1} solves \eqref{pdeFracSphere} and satisfies the conditions of Theorem \ref{TheoremMain}. Since $u_0\in H^s(\mathbb{S}^2_1)$ we know that
\begin{equation}\label{initial-angular-spectrum}
A_l(u_0)=\sum_{m=-l}^l (b_{lm}(x_0))^2=\sum_{m=-l}^l (R_l Y^*_{lm}(x_0))^2\leq C (2l+1)^{-2s}
\end{equation}
where $C=||u_0||_{s,2}.$ The angular power spectrum  for $u_{\nu}(x, t; x_0, t_0)$ in  \eqref{formal-sol-L-1} is  then given by
\begin{equation}\label{angular-spectrum-solution}
\begin{split}
A_l(t, t_0) = & \sum_{|m|\leq l} \big|a_{lm}(x; x_0, t_0) \big|^2\\
= & \Big( R_l \, E_{\nu}(-\mu_l\, (t - t_0)^\nu) \Big)^2 \sum_{|m|\leq l} \big| Y^*_{lm}(x_0) \big|^2\\
=&\Big( \, E_{\nu}(-\mu_l\, (t - t_0)^\nu) \Big)^2 A_l(u_0)\\
\leq & A_l(u_0).
\end{split}
\end{equation}
Hence for fixed $t>t_0$ as $l\to \infty$, using \eqref{mittag-leffler-large-asymptotics} we can see that
\begin{equation}\label{angular-spectrum-asymptotic}
A_l(t, t_0)\sim (\mu_l(t-t_0)^\nu)^{-2}A_l(u_0)\sim \bigg(\frac{1}{(t-t_0)^\nu\Gamma(1-\nu)}\bigg)^{2}l^{-4}A_l(u_0).
\end{equation}

{\bf Step 1. }First we check that
$$\| u_{\nu}(\cdot, t; x_0, t_0)\|^2_{L^2(\mathbb{S}^2_1)} < \infty$$
uniformly in $t \geq t_0$. First we notice that
 From the Parseval's identity, Equation \eqref{angular-spectrum-solution} and  Equation \eqref{Ebound} we get that
\begin{align*}
\| u_{\nu}(\cdot, t; x_0, t_0)\|^2_{L^2(\mathbb{S}^2_1)} = & \sum_{lm} \big| a_{lm}(t; x_0,t_0) \big|^2 = \sum_{l \geq 0} A_l(t, t_0)\\
\leq & \sum_{l \geq 0} A_l(u_0)=\| u_0(\cdot - x_0) \|^2_{L^2(\mathbb{S}^2_1)}
\end{align*}
 this proves that $u_\nu \in L^2(\mathbb{S}^2_1)$.

{\bf Step 2.}
Next we show that the series
\eqref{formal-sol-L-1}
is  absolutely and uniformly convergent for $t>t_0
>0.$

Using  \eqref{Ebound}, \eqref{initial-angular-spectrum},  \eqref{additionT} and Cauchy-Schwartz inequality we have
\begin{eqnarray}
& &\sum_{l = 0}^{\infty} \sum_{m=-l}^{+l} |a_{lm}(t; x_0, t_0)|\, |Y_{lm}(x)|\nonumber\\
&\leq &
\sum_{l = 0}^{\infty} \sum_{m=-l}^{+l} |b_{lm}(x_0)| E_\nu(\mu_l(t-t_0)^\nu) \, |Y_{lm}(x)|\nonumber\\
&\leq &\sum_{l = 0}^{\infty}   \sqrt{\sum_{m=-l}^{+l} (E_\nu(\mu_l(t-t_0)^\nu))^2|b_{lm}(x_0)|^2 } \sqrt{\sum_{m=-l}^l ( Y_{lm}(x))^2}\nonumber\\
&\leq &(t-t_0)^{-\nu}\sum_{l = 0}^{\infty}  l^{-2} \sqrt{C} (2l+1)^{-s} \sqrt{\frac{(2l+1)}{4\pi}}<\infty\label{absolute-bound}
\end{eqnarray}
since $s>0$ we have $s+2-1/2=s+3/2>1$.
Hence the function in \eqref{formal-sol-L-1}  is absolutely and uniformly convergent.

{\bf Step 3.} We next show that $\partial_t^\nu u_{\nu}(x, t; x_0, t_0)$  exists pointwise as a continuous function.
Using \cite[Equation (17)]{krageloh}
$$
\left| \frac{ d E_\nu(-\mu_l
(t-t_0)^\beta)}{dt}\right|\leq c\frac{\mu_l( t-t_0)^{\nu-1}}{1+\mu_l (t-t_0)^\nu}\leq c\mu_l(t-t_0)^{\nu -1},
$$
and  \eqref{initial-angular-spectrum} and \eqref{symmetry-equality} we get
\begin{eqnarray}
\left|\partial_t u_{\nu}(x, t; x_0, t_0)\right|&\leq & \sum_{n=1}^\infty |b_{lm}(x_0)|\left| \frac{ d E_\nu(-\mu_l
(t-t_0)^\nu)}{dt}\right||Y_{lm}(x)| \nonumber\\
&\leq &c(t-t_0)^{\nu -1} \sum_{l = 0}^{\infty} \sum_{m=-l}^{+l} \mu_l|b_{lm}(x_0)||Y_{lm}(x)|\nonumber\\
&:=&c(t-t_0)^{\nu -1}g(x)\nonumber\\
&\leq &c(t-t_0)^{\nu -1} \sum_{l = 0}^{\infty}\mu_l\sqrt{ \sum_{m=-l}^{+l} |b_{lm}(x_0)|^2}\sqrt{\sum_{m=-l}^n|Y_{lm}(x)|^2}\nonumber\\
&\leq &c\sqrt{C/4\pi}(t-t_0)^{\nu -1} \sum_{l = 0}^{\infty}  (l(l+1))(2l+1)^{-s+1/2}<\infty\nonumber
\end{eqnarray}
since $-s+5/2<-1$  when $s>7/2=1+5/2$. Hence by Remark \ref{strong-caputo-derivative}, $\partial_t^\nu u_{\nu}(x, t; x_0, t_0)$  exists pointwise as a continuous function and  is defined as a classical function.

{\bf Step 4.}
 We show here that we can
 apply $\Delta_{\mathbb{S}^2_1}$ and $\partial^\nu_t$ term-by-term to the series \eqref{formal-sol-L-1}. For this  we need to show that the series
\eqref{newserEquals}
is  absolutely and uniformly convergent for $t>t_0
>0.$

From \eqref{eigenY}, term by term we get
\begin{equation}
\begin{split}
\sum_{l = 0}^{\infty} \sum_{m=-l}^{+l}\, \partial^{\nu}_{t}\, a_{lm}(t; x_0, t_0) Y_{lm}(x)  &= - \sum_{l = 0}^{\infty} \sum_{m=-l}^{+l} a_{lm}(t; x_0, t_0) \mu_l\, Y_{lm}(x) \label{newserEquals}\\
&=\sum_{l \geq 0} \mu_l\, R_l\, E_{\nu}\left( - \mu_l\, (t-t_0)^{\nu} \right) \frac{2l+1}{4\pi} P_l(\langle x, x_0 \rangle)
\end{split}
\end{equation}

 By the previous step, using \eqref{Ebound}, \eqref{symmetry-equality} and the Cauchy-Schwartz inequality
 we get
\begin{eqnarray}
& &\sum_{l = 0}^{\infty} \sum_{m=-l}^{+l} |a_{lm}(t; x_0, t_0)| \mu_l\, |Y_{lm}(x)|\nonumber\\
&\leq &
\sum_{l = 0}^{\infty} \sum_{m=-l}^{+l} |b_{lm}(x_0)| E_\nu(\mu_l(t-t_0)^\nu) \mu_l\, |Y_{lm}(x)|\nonumber\\
&\leq &\sum_{l = 0}^{\infty} |E_\nu(\mu_l(t-t_0)^\nu) \mu_l \sqrt{\sum_{m=-l}^{+l} |b_{lm}(x_0)|^2 } \sqrt{\sum_{m=-l}^l ( Y_{lm}(x))^2}\nonumber\\
&\leq &\sum_{l = 0}^{\infty}  \frac{\mu_l}{1+\mu_l (t-t_0)^\nu} \sqrt{C} (2l+1)^{-s} \sqrt{\frac{(2l+1)}{4\pi}}\label{absolute-bound}\\
&\leq & (t-t_0)^{-\nu}\sum_{l = 0}^{\infty} \sqrt{C/4\pi}(2l+1)^{-s+1/2}<\infty
\end{eqnarray}
since $s>3/2$ we have $s-1/2>1$.
Hence
 $$
\sum_{l = 0}^{\infty} \sum_{m=-l}^{+l}\, \partial^{\nu}_{t}\, a_{lm}(t; x_0, t_0) Y_{lm}(x)
=\sum_{l = 0}^{\infty} \sum_{m=-l}^{+l}\, \, a_{lm}(t; x_0, t_0) \Delta_{\mathbb{S}^2_1}Y_{lm}(x)
$$
since the two series are equal term-by-term, and since the series on the right converges absolutely.

Now it is easy to check that
 the fractional time derivative and $\Delta_{\mathbb{S}^2_1}$ can be applied term by term in
(\ref{formal-sol-L-1}) to give
\begin{equation*}\begin{split}
&(\partial_t^\nu-\Delta_{\mathbb{S}^2_1})u_{\nu}(x, t; x_0, t_0)\\
&=\sum_{l = 0}^{\infty} \sum_{m=-l}^{+l}\, b_{lm}(x_0)[Y_{lm}(x)\partial^{\nu}_{t}\, E_\nu(-\mu_l(t-t_0)^\nu) -E_\nu(-\mu_l(t-t_0)^\nu)\Delta_{\mathbb{S}^2_1}Y_{lm}(x)]=0,
\end{split}
\end{equation*}
so that the PDE in (\ref{pdeFracSphere}) is satisfied.
Thus, we conclude that $u$ defined by (\ref{formal-sol-L-1}) is a
classical (strong) solution to (\ref{pdeFracSphere}).


{\bf Step 5.} We show that $\mathfrak{B}^x(\mathfrak{L}(t))$ is a stochastic solution to \eqref{pdeFracSphere}.
 The solution to $\partial_t u_1 = \triangle_{\mathbb{S}^2_1} u_1$ subject to the initial condition $u_0$ is written as $u_1(x,t;x_0, t_0) = \mathbb{E} u_0\left( \mathfrak{B}^{x}(t-t_0)-x_0\right), \quad x\in \mathbb{S}^2_1,\; t>t_0\geq 0$ or, in terms of the convolution semigroup, as
\begin{equation*}
u_1(x,t;x_0, t_0) = P_{t-t_0}\, u_0(x - x_0)
\end{equation*}
where  the strongly continuous semigroup $P_t$ on $L^2(\mathbb{S}^2_1)$  is satisfies
\begin{equation*}
\lim_{t\downarrow 0} \frac{P_t u_1 - u_1}{t} = \triangle_{\mathbb{S}^2_1}u_1
\end{equation*}
Since  $\triangle_{\mathbb{S}^2_1}$ is self-adjoint operator by Remark 3.34 in \cite{MarPeccBook}. We define the operator
\begin{equation*}
P^\nu_t f(x)= \int_0^\infty   \, l_\nu(s, t)\, P_sf(x) ds.
\end{equation*}


 Finally, we obtain the stochastic representation of the solution.
Since $\{Y_{lm}\}$ forms a complete orthonormal basis for
$L^2(\mathbb{S}^2_1),$ the
 semigroup
$${\E}[u_0(\mathfrak{B}^x(t-t_0)-x_0)]=P_{t-t_0}\, u_0(x - x_0)=\sum_{l=0}^\infty \sum_{m=-l}^l e^{-\mu_l t}b_{lm}(x_0)Y_{lm}(x)$$
is the unique solution to \eqref{brownian-pde}  where
$$
\int_{\mathbb{S}^2_1} P_{t-t_0}\, u_0(x - x_0)Y^*_{lm}(x)\lambda(dx)=e^{-\mu_l t}b_{lm}(x_0)
$$
we can write
by using Fubini  Theorem together with   equations \eqref{lapL} and \eqref{formal-sol-L-1}
 to get
\begin{eqnarray}
u_\nu(x,t;x_0,t_0)&=&\sum_{l=0}^\infty\sum_{m=-l}^lb_{lm}(x_0)E_\nu(-\mu_l(t-t_0)^\nu) Y_{lm}(x)=\sum_{l=0}^\infty\sum_{m=-l}^l a_{lm}(t;x_0,t_0) Y_{lm}(x)\nonumber\\
&=&\sum_{l=0}^\infty\sum_{m=-l}^l b_{lm}(x_0)Y_{lm}(x)\int_0^{\infty}e^{-y\mu_{l}}l_\nu(t,y)dy\nonumber\\
&=&\int_0^{\infty}\left[\sum_{l=0}^\infty\sum_{m=-l}^lb_{lm}(x_0)Y_{lm}(x)e^{-y\mu_l}\right]l_\nu(t,y)dy\nonumber\\
&=&\int_{0}^{\infty}P_{y-t_0}\, u_0(x - x_0)l_\nu(t,y)dy=P^\nu_{t-t_0}u_0(x-x_0)\nonumber\\
&=&\int_{0}^{\infty}{\E}[u_0(\mathfrak{B}^x(y)-x_0)]l_\nu(t,y)dy\nonumber\\
&=&{\E}[u_0(\mathfrak{B}^x(\mathfrak{L}^\nu_{t-t_0}))].\nonumber
\end{eqnarray}

 Uniqueness follows by considering two solutions $u_1,u_2$ with the same initial data, and showing that $u_1-u_2\equiv 0$.

\end{proof}
\begin{os}\label{TRD-density-remark}
We notice that if the initial condition in \eqref{pdeFracSphere} is the Dirac delta function
$$u_0(x-x_0)=\delta(x-x_0)$$
then, from the completeness relationship for spherical harmonics
\[ \delta(x - x_0) = \lim_{n \to \infty} \delta_n(x-x_0) = \lim_{n \to \infty} \sum_{l=0}^{n}\sum_{m=-l}^{+l} Y_{lm}^{*}(x_0)\, Y_{lm}(x) \]
we can write that
\[ a_{lm}(t; x_0, t_0) = Y_{lm}^{*}(x_0) \, E_{\nu}(-\mu_l\, (t - t_0)^\nu).  \]
As we can check, this implies that $R_l \equiv 1$ for all $l \geq 0$ with
\begin{equation*}
A_l(t, t_0) = \frac{2l+1}{4\pi}, \quad \textrm{for all }\; t, t_0>0
\end{equation*}
and also that
$$\delta_n(0) = \sum_{l=0}^{n} \frac{2l+1}{4\pi}.$$
\end{os}

\begin{os}
We check that the transition density \eqref{lawFracRot} integrates to unity. We recall that $Y_{00}(x) = 1/\sqrt{4\pi}$ for all $x \in \mathbb{S}^2_1$. Furthermore, from the integral \eqref{appendixInt2Y}, we have that
\begin{equation*}
\int_{\mathbb{S}^2_1} Y_{lm}(x)\lambda(dx) = \sqrt{4\pi} \int_{\mathbb{S}^2_1} Y_{lm}(x)Y_{00}(x)\lambda(dx) = \sqrt{4\pi} \delta_{l}^0 \delta_m^0.
\end{equation*}
From this and the fact that $E_{\nu}(0)=1$, we obtain
$$\int_{\mathbb{S}^2_1} u_{\nu}(x, t; x_0, t_0) \lambda(dx) = \int_{\mathbb{S}^2_1} u_{\nu}(x, t; x_0, t_0) \lambda(dx_0) = R_0$$
which, under the hypothesis of Theorem \ref{TheoremMain}, leads to the claim.
\end{os}

\begin{os}
For $x_0=x_N$, that is the North Pole, we have that (see Section 5.13.2 of \cite{VMK08})
\begin{equation*}
Y_{lm}(x_0) = 0 \quad \textrm{for } m \neq 0 \textrm{ and } Y_{l0}(x_0) = \sqrt{\frac{2l+1}{4\pi}}
\end{equation*}
whereas (\cite{VMK08}), for $x = (\vartheta, \varphi) \in \mathbb{S}^2_1$,
\begin{equation}
Y_{l0}(\vartheta, \varphi) = \sqrt{\frac{2l+1}{4\pi}} P_l(\cos \vartheta). \label{Y10allx}
\end{equation}
The distribution \eqref{lawFracRot} becomes
\begin{equation}
u_\nu(\vartheta, \varphi, t; x_N, t_0) = \sum_{l \geq 0} \frac{2l+1}{4\pi} R_l\, E_\nu(-\mu_l (t - t_0)^\nu)\, P_l(\cos \vartheta)
\end{equation}
for $0 \leq \vartheta \leq \pi$ and $t>t_0 \geq 0$.
\end{os}

\begin{os}
For $\nu \to 1$ we have that (see \cite{Btoi96}) $\mathfrak{L}^{1}_t \stackrel{a.s.}{\to} t$ and therefore the inverse process $\mathfrak{L}^{1}_t$, $t>0$, becomes the elementary subordinator $t ,\, t\geq 0$.  On the other hand the fractional equation of the Cauchy problem \eqref{pdeFracSphere}, for $\nu=1$, takes the following form
\begin{equation*}
\partial_t u_1 = \triangle_{\mathbb{S}_{1}^2} u_1\\
\end{equation*}
with solution, from the classical Sturm-Liouville theory, given by
\begin{equation}
u_1(x, t; x_0, t_0) = \sum_{l \geq 0} \sum_{m =-l}^{+l} R_l\, \exp\left( -\mu_l (t-t_0)\right)\, Y_{lm}(x) \,Y_{lm}^{*}(x_0)\label{lawBrot}
\end{equation}
which is in accord with the property of the Mittag-Leffler function $E_{1}(-z)= e^{-z}$. Formula \eqref{lawBrot} can be also written as
\begin{equation*}
u_1(x, t; x_0, t_0) = \mathbb{E}u_0( \mathfrak{B}^x(t-t_0)-x_0), \quad x,x_0 \in \mathbb{S}^2_1,\; t>t_0
\end{equation*}
where $\mathfrak{B}^x(t)$, $t>0$ is the rotational Brownian motion or Brownian motion on $\mathbb{S}^2_1$ started at $x$ at $t=t_0$.
\end{os}

\begin{os}
For $\nu \neq 1$ and $u_0 \neq \delta$ the following holds true
\begin{align}
u_\nu(x_2,t_2; x_0, t_0) \neq \int_{\mathbb{S}^2_1}  u_\nu(x_2,t_2; x_1, t_1)\, u_\nu(x_1,t_1; x_0, t_0)\, \lambda(dx_1) \label{non-Markov}
\end{align}
where $t_0 < t_1 < t_2$ and $x_0, x_1,x_2 \in \mathbb{S}^2_1$. Indeed, from \eqref{reproducK} we have that
\begin{equation*}
\int_{\mathbb{S}^2_1} P_{l_1}(\langle x_2, x_1 \rangle)\, P_{l_2}(\langle x_1, x_0 \rangle )\, \lambda(dx_1) = \delta_{l_1}^{l_2} \frac{4\pi}{2l_1 + 1} P_{l_1}(\langle x_2, x_0 \rangle)
\end{equation*}
and therefore
\begin{align*}
& \int_{\mathbb{S}^2_1}  u_\nu(x_2,t_2; x_1, t_1)\, u_\nu(x_1,t_1; x_0, t_0)\, \lambda(dx_1)\\
= & \sum_{l \geq 0} \frac{2l+1}{4\pi} R_l^2 \, E_{\nu}(-\mu_l (t_2-t_1)^\nu)\, E_{\nu}(-\mu_l (t_1-t_0)^\nu)\, P_l(\langle x_2, x_0 \rangle ).
\end{align*}
which coincides with $u_\nu(x_2,t_2; x_0, t_0)$ only if $\nu=1$ and $u_0=\delta$. Indeed, for $\nu=1$,
\begin{equation*}
E_{\nu}(-\mu_l (t_2-t_1)^\nu)\, E_{\nu}(-\mu_l (t_1-t_0)^\nu) = E_{\nu}(-\mu_l (t_2-t_0)^\nu)
\end{equation*}
and, for $u_0=\delta$, $R_l \equiv 1$ for all $l \geq 0$.
\end{os}

\begin{os}
We observe that
\begin{equation*}
\mathbb{P}\{ d(x, \mathfrak{B}^x_{\nu}(t)) \leq \rho \} = \mathbb{P}\left\lbrace  \tau^{t_0}_{\mathcal{B}(x, \rho)}(\mathfrak{B}^x_{\nu}) > t >t_0 \right\rbrace
\end{equation*}
where
$$\tau^{t_0}_{\mathcal{B}(x, \rho)}(\mathfrak{B}^x_{\nu}) = \inf \{ s\geq t_0\,: \, \mathfrak{B}^x_{\nu}(s) \notin \mathcal{B}(x, \rho)  \}$$
and
$$\mathcal{B}(x, \rho) = \{ y \in \mathbb{S}^2_r \,:\, d(x,y) \leq \rho \}, \quad \rho \in (0, \pi).$$
From the fact that $\mathfrak{B}^x_{\nu} \stackrel{d}{=} \mathfrak{B}^{gx}_{\nu}$ for all $g \in SO(3)$, we can also write
\begin{equation*}
\mathbb{P}\left\lbrace \tau^{t_0}_{\mathcal{B}(x, \rho)}(\mathfrak{B}^x_{\nu}) > t\right\rbrace = \mathbb{P}\left\lbrace \tau^{0}_{\mathcal{B}(0, \rho)}(\mathfrak{B}^0_{\nu}) > t-t_0 \right\rbrace
\end{equation*}
or equivalently that
\begin{equation*}
\tau^{t_0}_{\mathcal{B}(x, \rho)}(\mathfrak{B}^x_{\nu}) \stackrel{d}{=} \tau^{0}_{\mathcal{B}(0, \rho)}(\mathfrak{B}^0_{\nu}), \quad x \in \mathbb{S}^2_1.
\end{equation*}
The above distribution (see for example \cite{CamOrs12}) solves the Dirichlet problem
\begin{equation}
\left\lbrace
\begin{array}{ll}
\triangle_{\mathbb{S}_{r}^2} \, q(\vartheta, \varphi; \vartheta_0, \varphi_0) = 0, & 0 < \vartheta < \vartheta_0 < \pi, \; \varphi, \varphi_0 \in [0, 2\pi)\\
q(\vartheta_0, \varphi; \vartheta_0, \varphi_0 ) = \delta(\varphi - \varphi_0),
\end{array}
\right .
\end{equation}
and is written as
\begin{equation}
q(\vartheta, \varphi; \vartheta_0, \varphi_0) = \frac{1}{2\pi} \frac{\cos \vartheta_0 - \cos \vartheta}{1 - \cos \vartheta\, \cos \vartheta_0 - \sin \vartheta \, \sin \vartheta_0 \, \cos (\varphi - \varphi_0)}.
\end{equation}
The solution, say $\bar{u}_\nu$, to the problem \eqref{pdeFracSphere} on the bounded domain
\begin{equation*}
 \mathcal{D} = \left\lbrace (\vartheta, \varphi)\, : \, 0 < \vartheta < \vartheta_0 < \pi, \; \varphi, \varphi_0 \in [0, 2\pi),\; (\vartheta_0, \varphi_0) \in \mathbb{S}^2_1 \right\rbrace
\end{equation*}
can be written as
\begin{equation}
\bar{u}_\nu(x, x_0) = \mathbb{E} u_0\left(\mathfrak{B}^{x}(\mathfrak{L}^\nu_t)\right) \, 1_{\left(\tau^{0}_{\mathcal{D}}(\mathfrak{B}^0_{\nu}) > t-t_0\right)}, \quad x \in \mathcal{D},\; t>t_0?
\end{equation}
See \cite{BMN09ann} for more on fractional Cauchy problems in bounded domains.
\end{os}

\section{Random fields indexed by  spherical non-homogeneous surfaces}
\label{SecRFNHS}
\subsection{Random fields indexed by  the sphere}
In this section we will consider a $n$-weakly isotropic random field $\{ T(x);\, x \in \mathbb{S}^2_1 \}$ on the sphere $\mathbb{S}^2_1$ for which
\begin{equation*} \mathbb{E}T(gx) = 0, \quad \textrm{and} \quad T(gx) \stackrel{d}{=} T(x), \; \textrm{ for all } g \in SO(3)
\end{equation*}
 here  we recall that $\overset{d}{=}$ means equality in distribution of stochastic processes and $SO(3)$ the special group of rotations in $\mathbb{R}^{3}$.

As already pointed out in Section \ref{SecRBM}, the triangular array
$$\{Y_{lm}\,:\,l\geq 0,\, m=-l,\ldots ,+l\}$$
of spherical harmonics  represents a set of eigenfunctions for the Laplacian on the sphere and an orthonormal basis for $L^{2}(\mathbb{S}^{2})$. Hence  we can see that $2$-weakly isotropic random fields admit the spectral representation
\begin{equation}
T(x) = \sum_{l \geq 0} T_l(x) = \sum_{l \geq 0}\sum_{|m| \leq l} a_{lm} Y_{lm}(x)\label{TrepSec1}
\end{equation}
where
\begin{equation}
a_{lm}=\int_{\mathbb{S}^2_1} T(x) Y^{*}_{lm}(x) \lambda(dx) \label{almS1}
\end{equation}
 is a set of random spherical harmonics coefficients that
are Fourier random coefficients and $\lambda$ is the Lebesgue measure such that $\lambda(\mathbb{S}^2_1)=4\pi$. In formula \eqref{almS1} the symbols $Y^*_{lm}$ stands for the complex conjugation of $Y_{lm}$ and convergence in \eqref{TrepSec1} holds in the mean square sense, both with respect to $L^{2}(dP)$ for fixed $x$ and with respect to $L^{2}(dP\otimes \lambda (dx)),$ i.e.
\begin{equation*}
\lim_{L\rightarrow \infty }\mathbb{E}\left(T(x)-\sum_{l=0}^{L}\sum_{m=-l}^{+l}a_{lm}Y_{lm}(x)\right)^{2}= 0
\end{equation*}
and
\begin{equation*}
\lim_{L\rightarrow \infty }\mathbb{E}\left[ \int_{\mathbb{S}^{2}}\left( T(x) - \sum_{l=0}^{L} \sum_{m=-l}^{+l} a_{lm} Y_{lm}(x) \right)^{2} \lambda (dx) \right] =0.
\end{equation*}
  Under isotropy, $\{a_{lm}\}$ are zero-mean valued and uncorrelated over $l$ and $m$ with $\mathbb{E}\left| a_{lm}\right| ^{2}=C_{l}$ where $C_l$ is the angular power spectrum of the random field $T$. We notice that (from the isotropy of $T$) $C_l$ depends only on $l$ as shown in \cite{balMar07, marpec10}. Furthermore, for the harmonic coefficients associated with the frequency $l$, we have that (see formula \eqref{Yconj})
\begin{equation}
a_{lm} = (-1)^m a_{l-m}^* \label{propa1}
\end{equation}
and the following orthogonality property holds
\begin{equation}
\mathbb{E}[a_{l_1m_1} a_{l_2m_2}^*] = \delta_{l_1}^{l_2} \delta_{m_1}^{m_2} C_{l_1}\label{propa2}
\end{equation}
($\delta_a^b$ is the Kronecker symbol \eqref{kronSy}). From \eqref{propa2} we immediately get that $\mathbb{E}|a_{lm}|^2 = C_l$, $l \geq 0$. As in \eqref{SobolevS}, for the angular power spectrum $C_l$ we have that
\begin{equation}
\sum_{l\geq 0} \sum_{|m|\leq l} (2l+1)^{2s} \mathbb{E}|a_{lm}|^2 = \sum_{l\geq 0}(2l+1)^{2s} (2l+1)C_l < \infty \label{SobolevT}
\end{equation}
for some $s \geq 0$. In the light of the previous discussion, for the zero-mean $n$-weakly isotropic random field $T$ we can write the following expression for the covariance function
\begin{equation}
\mathbb{E}[T(x)T(y)] = \sum_{l\geq 0} \frac{2l+1}{4\pi} C_l \, P_l(\langle x, y \rangle), \quad x, y \in \mathbb{S}^2_1 \label{TcovarianceFunction}
\end{equation}
where $\langle x, y \rangle$ is the angle between $x$ and $y$. Indeed, from the representation \eqref{TrepSec1}, we have that
\begin{equation}
\mathbb{E}[T(x)T(y)] = \sum_{l_1 \geq 0} \sum_{l_2 \geq 0} \mathbb{E}[T_{l_1}(x)T_{l_2}(y)]
\end{equation}
where
\begin{equation*}
T_{l_i}(z) = \sum_{m=-l_i}^{+l_i} a_{l_i m} Y_{l_i m }(z), \quad i=1,2
\end{equation*}
and, by taking into account formulae \eqref{propa1} and \eqref{propa2}, we get that
\begin{align*}
\mathbb{E}[T_{l_1}(x)T_{l_2}(y)] = & \sum_{|m_1|\leq l_1} \sum_{|m_2|\leq l_2} \delta_{l_1}^{l_2} \delta_{m_1}^{m_2}C_{l_1}\, Y_{l_1m_1}(x) Y^*_{l_2m_2}(y)\\
= & C_{l_1} \sum_{|m_1|\leq l_1} Y_{l_1m_1}(x) Y^*_{l_2m_2}(y).
\end{align*}
From the addition formula \eqref{additionT} the covariance \eqref{TcovarianceFunction} immediately follows.

 By considering that $P_l(\langle x, x \rangle)=1$, from \eqref{TcovarianceFunction} and \eqref{SobolevT} we also obtain that
\begin{align}
\mathbb{E}[T(x)]^2 = & \sum_{l \geq 0} \frac{2l+1}{4\pi}C_l < \infty. \label{varT}
\end{align}
Under isotropy, the harmonic coefficients $a_{lm}$ are zero-mean and uncorrelated over $l$. Furthermore, if $T$ is Gaussian, then $a_{lm}$ are Gaussian and independent random coefficients (see \cite{balMar07}). We will use throughout also the following fact (see \cite{marpec10})
\begin{equation}
\mathbb{E} [a_{l_1 m_1} \ldots a_{l_n m_n}] = (-1)^{m_n} \sum_{\lambda_1 \ldots \lambda_{n-3}} C_{l_1 m_1 \ldots l_{n-1}m_{n-1}}^{\lambda_1 \ldots \lambda_{n-3}l_n -m_n} \, P_{l_1, \dots , l_n}(\lambda_1, \ldots , \lambda_{n-3})\label{momentan}
\end{equation}
where $C_{l_1 m_1 \ldots l_{n-1}m_{n-1}}^{\lambda_1 \ldots \lambda_{n-2}l_n -m_n}$ is a convolution of Clebsch-Gordan coefficients and $P_{l_1, \dots , l_n}(\cdot)$ is the reduced polyspectrum associated with the isotropic random field $T$. If $n=3$ for instance, then the formula \eqref{momentan} becomes
\begin{equation}
\mathbb{E}[a_{l_1m_1}a_{l_2m_2}a_{l_3m_3}]= (-1)^{m_3} C^{l_3-m_3}_{l_1m_1l_2m_2} P_{l_1l_2l_3}.\label{ThirdOrdMom}
\end{equation}

For more information on random fields on the sphere the reader can consult the book by \cite{MarPeccBook} whereas, we refer to the book by \cite{AdTay07} for random fields and geometry.

\subsection{Random fields indexed by  the non-homogeneous sphere}
In this section we focus on the main object of the work which is the time-changed (we mean composition of processes) random field
\begin{equation}
\mathfrak{T}^{\nu}_t (x)=T(\mathfrak{B}^{x}_{\nu}(t)) = T(x + \mathfrak{B}(\mathfrak{L}^\nu_{t-t_0})), \quad t>t_0 \geq 0,\; x \in \mathbb{S}^2_1, \; \nu \in (0,1] \label{RFQdef}
\end{equation}
where $\mathfrak{B}^{x}_{\nu}(t)$, $t>t_0$ is the time-changed rotational Brownian motion starting from $x$ at time $t_0$ and $\{T(x), \; x \in \mathbb{S}^2_1\}$ is the (Gaussian) random field depicted in the previous section. We assume that $T$ is independent from $\mathfrak{B}^{x}_{\nu}$. Random field with randomly varying parameters have been also studied in the interesting  work \cite{AlloubaNane12}. Based on the representation  \eqref{TrepSec1} of $T$, for the random field
$$\{\mathfrak{T}^{\nu}_t (x):\, x \in \mathbb{S}^2_1,\, t>t_0\geq 0\}, \quad \nu \in (0,1]$$
we write
\begin{equation}
\mathfrak{T}^{\nu}_t (x) = \sum_{l=0}^\infty \sum_{m=-l}^l a_{lm} Y_{lm}(\mathfrak{B}^{x}_{\nu}(t)), \quad t>t_0, \; x \in \mathbb{S}^2_1, \; \nu \in (0,1]
\end{equation}
which must be understood in the $L_2$ sense, that is
\begin{equation*}
\lim_{L \to \infty} \mathbb{E}\Bigg[ \mathbb{E} \bigg\{\Big[T(\mathfrak{B}^{x}_{\nu}(t)) - \sum_{l=0}^{L} \sum_{m=-l}^{+l} a_{lm} Y_{lm}(\mathfrak{B}^{x}_{\nu}(t))  \Big]^2\Bigg| \; \mathfrak{F}_{\mathfrak{B}}\bigg\}  \; \Bigg]  = 0.
\end{equation*}
We recall that the TRD $\mathfrak{B}^{x}_{\nu}(t)$, $t>t_0$ is a measurable map from $(\Omega, \mathfrak{F}_{\mathfrak{B}}, \mathbb{P})$ to $(\mathbb{S}^2_1, \mathcal{B}(\mathbb{S}^2_1), \lambda_{\mathfrak{B}})$.

In the rest of the paper use the double summation notation whenever appropriate $$\sum_{lm}=\sum_{l=0}^\infty\sum_{m=-l}^l.$$ 


\begin{os}
Let us consider the random shift $s_t : x \mapsto \mathfrak{B}^{x}_{\nu}(t)$, such that
\begin{equation}
s_t x = x + \mathfrak{B}^{0}_{\nu}(t-t_0) = \mathfrak{B}^{x}_{\nu}(t) \label{gtAction}
\end{equation}
where $0=x_N$ is the North Pole. The process \eqref{RFQdef} can be therefore regarded as
\begin{equation}
\mathfrak{T}^{\nu}_t (x) = T(s_t x), \quad x \in \mathbb{S}^2_1, \; t>t_0
\end{equation}
where the random field $T$ is indexed by $s_tx$ and therefore under the action of $s_t$. Formula \eqref{gtAction} says that the spherical coordinate $x \in \mathbb{S}^2_1$ is affected by some noise depending on time. As we will see further in the text, for $\nu \to 1$ we get that $\mathfrak{B}^{0}_{\nu} \to \mathfrak{B}^0$ and therefore the noise becomes the standard rotational Brownian motion. For $\nu \in (0,1)$, the random environment in which $T$ is indexed is represented by the TRD $\mathfrak{B}^{x}_{\nu}$ and therefore is a non-homogeneous sphere.
\end{os}

As expected, the properties of \eqref{RFQdef} are strictly related with those of $T$. In particular, we begin our analysis by presenting the following result concerning the higher-order moments of the composition $\mathfrak{T}^{\nu}_t (x)$.
\begin{lem}\label{n-moment-time-changed-field}
Let the previous setting prevail. The following holds
\begin{equation}
\mathbb{E}[\mathfrak{T}^{\nu}_t (gx)]^n = \mathbb{E}[T(x)]^n, \quad  t >t_0 \geq 0,\quad \nu \in (0,1], \quad n \in \mathbb{N}\quad g\in SO(3).
\end{equation}
\label{lemMomenti}
\end{lem}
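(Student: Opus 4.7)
The plan is to reduce the claim to the isotropy of $T$ by conditioning on the time-changed rotational Brownian motion. Since $T$ and $\mathfrak{B}^{\cdot}_\nu$ are independent by assumption, the conditional expectation of $[T(\mathfrak{B}^{gx}_\nu(t))]^n$ given $\mathfrak{F}_{\mathfrak{B}}$ can be evaluated as a deterministic function of the random point $y=\mathfrak{B}^{gx}_\nu(t)\in \mathbb{S}^2_1$, namely
\begin{equation*}
\mathbb{E}\!\left[\,[\mathfrak{T}^\nu_t(gx)]^n\,\big|\,\mathfrak{F}_{\mathfrak{B}}\right]=\Phi_n(\mathfrak{B}^{gx}_\nu(t)), \qquad \Phi_n(y):=\mathbb{E}[T(y)^n].
\end{equation*}
Taking the outer expectation then gives $\mathbb{E}[\mathfrak{T}^\nu_t(gx)]^n=\mathbb{E}[\Phi_n(\mathfrak{B}^{gx}_\nu(t))]$.

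The key step is to observe that $\Phi_n$ is in fact constant on $\mathbb{S}^2_1$. Setting $x_1=\cdots=x_n=y$ in the definition of $n$-weak isotropy yields $\Phi_n(y)=\Phi_n(hy)$ for every $h\in SO(3)$; since $SO(3)$ acts transitively on $\mathbb{S}^2_1$, it follows that $\Phi_n(y)\equiv\Phi_n(x)=\mathbb{E}[T(x)^n]$ for every $y\in \mathbb{S}^2_1$. Because $\mathfrak{B}^{gx}_\nu(t)\in \mathbb{S}^2_1$ almost surely, the outer expectation collapses to this constant and we obtain $\mathbb{E}[\mathfrak{T}^\nu_t(gx)]^n=\mathbb{E}[T(x)^n]$.

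Finiteness of the right-hand side is guaranteed by the hypothesis that $T$ is $n$-weakly isotropic, which requires $\mathbb{E}|T(gx)|^n<\infty$; this is exactly the condition needed to make the conditional expectation well defined and justifies the use of Fubini in the form above. The spectral identity announced just before Lemma \ref{n-moment-time-changed-field} can then be recovered as a corollary: expanding $T(x_N)=\sum_l a_{l0}\sqrt{(2l+1)/(4\pi)}$ at the North Pole (using $Y_{lm}(x_N)=\sqrt{(2l+1)/(4\pi)}\,\delta_m^0$), raising to the $n$-th power and interchanging expectation with the series gives the stated formula, the interchange being justified by $L^n$-summability that follows from the Sobolev-type assumption \eqref{SobolevT}.

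I do not expect a serious technical obstacle here: the whole argument rests on the tower property, independence, and one-point $n$-isotropy. The only care needed is to confirm that conditioning on $\mathfrak{F}_{\mathfrak{B}}$ is legitimate, i.e.\ that the map $y\mapsto \Phi_n(y)$ is measurable (which is immediate since $\Phi_n$ is constant) and that the moment $\mathbb{E}|T(x)|^n$ is finite (which is part of the hypothesis).
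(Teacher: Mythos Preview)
Your argument is correct and follows the same skeleton as the paper's proof: condition on $\mathfrak{F}_{\mathfrak{B}}$, observe that the inner expectation $\Phi_n(y)=\mathbb{E}[T(y)^n]$ is constant in $y$, and let the outer expectation collapse. The only difference is in how that constancy is justified: the paper invokes the spectral formula $\mathbb{E}[T(y)]^n=\sum_{l_1\ldots l_n}\sqrt{(2l_1+1)\cdots(2l_n+1)/(4\pi)^n}\,\mathbb{E}[a_{l_10}\cdots a_{l_n0}]$ from \cite{marpec10} (equation \eqref{momentan}) to see that the right-hand side is free of $y$, whereas you obtain it directly from the definition of $n$-weak isotropy together with the transitivity of the $SO(3)$-action on $\mathbb{S}^2_1$. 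Your route is more elementary and self-contained; the paper's route has the side benefit of simultaneously producing the explicit polyspectral expression announced before the lemma, which you recover separately at the end.
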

\begin{proof}
The claimed result follows by observing  that
\begin{align*}
\mathbb{E} [\mathfrak{T}^{\nu}_t (x)]^n = & \mathbb{E} \Bigg[ \mathbb{E} \bigg\{\Big[ \sum_{lm} a_{lm} Y_{lm}(\mathfrak{B}^x_{\nu}(t)) \Big]^n \Big|  \; \mathfrak{F}_{\mathfrak{B}} \;\bigg\} \Bigg] \\
= & \mathbb{E} \Bigg[ \mathbb{E}\bigg\{ \Big[ \sum_{lm} a_{lm} Y_{lm}(y) \Big]^n \Big| y = \mathfrak{B}^x_{\nu}(t)\bigg\}  \Bigg] , \quad \forall x \in \mathbb{S}^2_{1}\\
= & \mathbb{E} \Big[ \mathbb{E} \big\{[ T(y) ]^n \Big| y = \mathfrak{B}^x_{\nu}(t)  \big\}\Big] , \quad \forall x \in \mathbb{S}^2_{1}.
\end{align*}
From the fact that (see \cite{marpec10})
\begin{equation*}
\mathbb{E}[T(y)]^n = \sum_{l_1\ldots l_n} \sqrt{\frac{(2l_1+1)\cdots (2l_n+1)}{(4\pi)^n}} \mathbb{E}[a_{l_10}\cdots a_{l_n 0}]
\end{equation*}
which does not depend on $y \in \mathbb{S}^2_1$ as formula \eqref{momentan} entails, we obtain that
\begin{align*}
\mathbb{E} [\mathfrak{T}^{\nu}_t (x)]^n = & \mathbb{E}[ T(y) ]^n, \quad \forall y \in \mathbb{S}^2_{1}
\end{align*}
which, in turn, does not depend on $x \in \mathbb{S}^2_1$. Furthermore, $\mathfrak{B}^x_{\nu} \stackrel{d}{=} \mathfrak{B}^{gx}_{\nu}$, for all $g \in SO(3)$ and this concludes the proof.
\end{proof}
\begin{prop}
For $x \in \mathbb{S}^2_1$, $t> t_0\geq 0$ and $\nu \in (0,1]$, we have that
\begin{align}
\mathbb{E}Y_{l m}(\mathfrak{B}^{x}_{\nu}(t)) = & E_{\nu}(-\mu_l (t-t_0)^\nu) Y_{lm}(x) \label{EYprop2}
\end{align}
and
\begin{align}
\mathbb{E}Y_{l m}^*(\mathfrak{B}^{x}_{\nu}(t)) = &  E_{\nu}(-\mu_l (t-t_0)^\nu) Y^*_{lm}(x). \label{EYprop1}
\end{align}
\label{propositionEY12}
\end{prop}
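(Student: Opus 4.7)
The plan is to reduce the computation to the semigroup action of the ordinary rotational Brownian motion on spherical harmonics, and then unwind the time-change using the Laplace transform \eqref{lapL} of $\mathfrak{L}^{\nu}$.

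First I would condition on the random time $\mathfrak{L}^{\nu}_{t-t_0}$, using the independence of the inverse stable subordinator from the rotational Brownian motion $\mathfrak{B}^{x}$ (both are factors in the definition of the TRD as recalled in the Notations). This gives
\begin{equation*}
\mathbb{E}Y_{lm}(\mathfrak{B}^{x}_{\nu}(t)) = \int_{0}^{\infty} \mathbb{E}\bigl[Y_{lm}(\mathfrak{B}^{x}(s))\bigr]\, l_{\nu}(t-t_0, s)\, ds.
\end{equation*}
For the inner expectation I would invoke the eigenvalue problem \eqref{eigenY}: since $Y_{lm}$ is an eigenfunction of $\triangle_{\mathbb{S}^2_1}$ with eigenvalue $-\mu_l$, the heat semigroup $P_s$ of the rotational Brownian motion acts on it as $P_sY_{lm} = e^{-\mu_l s} Y_{lm}$, so that $\mathbb{E}\bigl[Y_{lm}(\mathfrak{B}^{x}(s))\bigr] = e^{-\mu_l s} Y_{lm}(x)$. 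Substituting back and pulling $Y_{lm}(x)$ out of the integral leaves
\begin{equation*}
\mathbb{E}Y_{lm}(\mathfrak{B}^{x}_{\nu}(t)) = Y_{lm}(x)\, \mathbb{E}\!\left[ \exp\bigl(-\mu_l \mathfrak{L}^{\nu}_{t-t_0}\bigr) \right],
\end{equation*}
and the Laplace transform formula \eqref{lapL} immediately identifies the bracketed expectation with $E_{\nu}\bigl(-\mu_l (t-t_0)^{\nu}\bigr)$, proving \eqref{EYprop2}.

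For the conjugate version \eqref{EYprop1} I would simply take complex conjugates of both sides of \eqref{EYprop2}. Since $Y_{lm}$ is continuous and the Mittag-Leffler function $E_{\nu}(-\mu_l (t-t_0)^{\nu})$ is real-valued for $\mu_l \geq 0$, conjugation passes under the expectation to give $\mathbb{E}Y^{*}_{lm}(\mathfrak{B}^{x}_{\nu}(t)) = E_{\nu}(-\mu_l (t-t_0)^{\nu})\, Y^{*}_{lm}(x)$. Alternatively, one repeats the argument above observing that $Y^{*}_{lm}$ is also an eigenfunction of $\triangle_{\mathbb{S}^2_1}$ with the same eigenvalue $-\mu_l$ (this is clear from the symmetry \eqref{Yconj}), so that the same eigenvalue-plus-Laplace-transform calculation goes through verbatim.

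There is no substantive obstacle here: the only ingredients are independence, the eigenfunction property of $Y_{lm}$, and the Laplace transform \eqref{lapL}. If any care is needed it is in justifying the interchange of expectation and integration when conditioning on $\mathfrak{L}^{\nu}_{t-t_0}$, but since $|Y_{lm}|$ is bounded on the compact sphere $\mathbb{S}^2_1$ and $l_{\nu}(t-t_0,\cdot)$ is a probability density, Fubini applies without difficulty.
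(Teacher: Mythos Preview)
Your proof is correct, but it follows a somewhat different route from the paper. The paper computes the expectation by integrating $Y_{lm}$ against the explicit transition density of the TRD (Remark \ref{TRD-density-remark}),
\[
u_{\nu}(y,t;x,t_0)=\sum_{\gamma\geq 0}\sum_{|\kappa|\leq \gamma} E_{\nu}(-\mu_\gamma (t-t_0)^\nu)\,Y_{\gamma\kappa}(y)Y^{*}_{\gamma\kappa}(x),
\]
and then collapses the double sum using the orthonormality relation \eqref{Yortho} together with the symmetry \eqref{Yconj}. Your argument instead bypasses the density entirely: you condition on $\mathfrak{L}^{\nu}_{t-t_0}$, invoke the eigenfunction identity $P_sY_{lm}=e^{-\mu_l s}Y_{lm}$ for the heat semigroup, and read off the Mittag-Leffler factor directly from \eqref{lapL}. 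This is shorter and more semigroup-theoretic; the paper's version, while more computational, has the minor advantage of illustrating how the orthogonality and symmetry relations of spherical harmonics are used throughout the article. For the conjugate identity both proofs coincide: you take complex conjugates (noting $E_\nu$ is real), exactly as the paper does.
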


\begin{proof}
We recall from Remark \ref{TRD-density-remark} that, for $\nu \in (0,1]$,
\begin{equation}
u_{\nu}(y,t; x,t_0) = \sum_{l =0}^{\infty}  \, E_{\nu}\left( - \mu_l\, (t-t_0)^{\nu} \right) \sum_{m=-l}^{+l} Y_{lm}(y)Y^{*}_{lm}(x) \label{distBanomalousProp}
\end{equation}
$y,x \in \mathbb{S}^2_1$, $t > t_0 \geq 0$, is the probability density of the TRD $\mathfrak{B}^{x}_\nu(t)$, $t>t_0$. From the property \eqref{Yconj} and the orthogonality of spherical harmonics \eqref{Yortho}, we get that
\begin{equation*}
\int_{\mathbb{S}^2_1} Y^*_{\gamma \kappa}(y)\, Y_{lm}(y)\, \lambda(dy) = \delta_{\gamma \kappa}^{lm}
\end{equation*}
and thus
\begin{align}
\mathbb{E}[Y_{lm}(\mathfrak{B}^{x}_{\nu}(t))] = & \int_{\mathbb{S}^2_1} u_\nu(y,t;x,t_0) Y_{lm}(y)\, \lambda(dy)\nonumber \\
= & \sum_{\gamma=0}^\infty\sum_{\kappa=-\gamma} ^{\gamma} \, E_{\nu}(-\mu_\gamma (t-t_0)^{\nu}) Y_{\gamma \kappa}^{*}(x) (-1)^m \int_{\mathbb{S}^2_1} Y_{\gamma \kappa}(y) Y_{l-m}^{*}(y) \lambda(dy) \nonumber \\
= &  \sum_{\gamma=0}^\infty\sum_{\kappa=-\gamma} ^{\gamma} \, E_{\nu}(-\mu_\gamma(t-t_0)^{\nu})  (-1)^m Y_{\gamma \kappa}^{*}(x) \delta_{\gamma}^l \delta_{\kappa}^{-m}\nonumber \\
= & \, E_{\nu}(-\mu_l\,(t-t_0)^{\nu}) (-1)^m Y^*_{l -m}(x)\notag \\
= & [\textrm{by } \eqref{Yconj}] = \, E_{\nu}(-\mu_l\,(t-t_0)^{\nu})  Y_{lm}(x).\label{EYprof}
\end{align}
Formula \eqref{EYprop1} can be obtained from result \eqref{EYprof} by considering the fact that
\begin{equation}
\mathbb{E} [Y_{lm}^{*}(\mathfrak{B}^{x_0}_{\nu}(t))] = \left( \mathbb{E} [Y_{lm}(\mathfrak{B}^{x_0}_{\nu}(t))] \right)^{*} = \, E_{\nu}(-\mu_l \,(t-t_0)^{\nu})  Y_{lm}^{*}(x_0)
\end{equation}
and the proof is completed.
\end{proof}

\begin{prop}
For $x \in \mathbb{S}^2_1$, $t> t_0\geq 0$ and $\nu \in (0,1]$, we have that
\begin{align}
\mathbb{E} \left[ Y_{lm}(\mathfrak{B}^{x}_{\nu}(t))\,Y_{lm}^{*}(\mathfrak{B}^{x}_{\nu}(t)) \right] =  \sum_{\gamma, \kappa} \mathcal{C}_{\gamma, \kappa}^{lm}  \, \, E_{\nu}(- \mu_\gamma (t-t_0)^\nu) Y^*_{\gamma \kappa}(x) \label{respropEYY}
\end{align}
where
\begin{equation*}
\mathcal{C}_{\gamma, \kappa}^{lm} = (-1)^m (2l+1) \sqrt{\frac{(2\gamma +1)}{4 \pi}} \left( \begin{array}{ccc} \gamma & l & l \\ 0 & 0 & 0  \end{array} \right) \left( \begin{array}{ccc} \gamma & l & l \\ \kappa & m& -m  \end{array} \right)
\end{equation*}
is written in terms of Wigner $3j$-symbols (see Appendix \ref{AppA}).
\label{propEYY}
\end{prop}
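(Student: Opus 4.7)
The plan is to reduce the computation to the Gaunt integral of three spherical harmonics and then identify the resulting $3j$-symbol expression with the constants $\mathcal{C}^{lm}_{\gamma,\kappa}$. First I would condition on the TRD and write
\begin{equation*}
\mathbb{E}\left[ Y_{lm}(\mathfrak{B}^{x}_{\nu}(t))\, Y_{lm}^{*}(\mathfrak{B}^{x}_{\nu}(t))\right] = \int_{\mathbb{S}^2_1} Y_{lm}(y) Y_{lm}^{*}(y)\, u_{\nu}(y,t;x,t_0)\, \lambda(dy),
\end{equation*}
then substitute the spectral expansion \eqref{distBanomalousProp} of the TRD density. This isolates a sum over $(\gamma,\kappa)$ of factors $E_{\nu}(-\mu_\gamma (t-t_0)^\nu) Y^*_{\gamma\kappa}(x)$ multiplied by the integrals
\begin{equation*}
I_{\gamma\kappa}^{lm} := \int_{\mathbb{S}^2_1} Y_{\gamma\kappa}(y)\, Y_{lm}(y)\, Y_{lm}^{*}(y)\, \lambda(dy).
\end{equation*}

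Next, using the symmetry identity \eqref{Yconj} to rewrite $Y_{lm}^{*}(y) = (-1)^{m} Y_{l,-m}(y)$, the integral $I_{\gamma\kappa}^{lm}$ becomes a pure triple-product integral of spherical harmonics, to which the classical Gaunt formula applies:
\begin{equation*}
\int_{\mathbb{S}^2_1} Y_{l_1 m_1} Y_{l_2 m_2} Y_{l_3 m_3}\, \lambda(dy) = \sqrt{\frac{(2l_1+1)(2l_2+1)(2l_3+1)}{4\pi}} \left(\begin{array}{ccc} l_1 & l_2 & l_3 \\ 0 & 0 & 0 \end{array}\right) \left(\begin{array}{ccc} l_1 & l_2 & l_3 \\ m_1 & m_2 & m_3 \end{array}\right).
\end{equation*}
Applied with $(l_1,l_2,l_3)=(\gamma,l,l)$ and $(m_1,m_2,m_3)=(\kappa,m,-m)$, this gives
\begin{equation*}
I_{\gamma\kappa}^{lm} = (-1)^{m}(2l+1)\sqrt{\frac{2\gamma+1}{4\pi}} \left(\begin{array}{ccc} \gamma & l & l \\ 0 & 0 & 0 \end{array}\right) \left(\begin{array}{ccc} \gamma & l & l \\ \kappa & m & -m \end{array}\right),
\end{equation*}
which is precisely $\mathcal{C}_{\gamma,\kappa}^{lm}$. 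Substituting back into the double sum yields \eqref{respropEYY}.

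The only non-routine point is justifying the term-by-term integration against the spectral series for $u_\nu$, which I would handle exactly as in Step 2 of the proof of Theorem \ref{TheoremMain}: the bound $|E_{\nu}(-\mu_\gamma (t-t_0)^\nu)|\leq 1$ from \eqref{Ebound} together with $|Y_{\gamma\kappa}(x)|\leq \sqrt{(2\gamma+1)/(4\pi)}$ and the decay of the Wigner $3j$-symbols (which vanish unless $|l-l|\leq \gamma \leq 2l$, so the $\gamma$-sum is finite for each fixed $l$) give absolute convergence and legitimize the interchange. The Gaunt selection rules in particular make the main sum over $\gamma$ effectively finite, which is the cleanest way to sidestep any convergence obstacle; I expect this selection-rule bookkeeping to be the only subtle step, while the identification with $\mathcal{C}_{\gamma,\kappa}^{lm}$ is then a direct matching of coefficients.
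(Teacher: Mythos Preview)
Your proposal is correct and follows essentially the same route as the paper: integrate against the spectral expansion \eqref{distBanomalousProp}, convert $Y_{lm}^{*}$ via \eqref{Yconj}, and apply the Gaunt formula \eqref{intprodtreY} to identify $\mathcal{C}_{\gamma,\kappa}^{lm}$. Your added remark that the triangle selection rule $0\leq \gamma \leq 2l$ truncates the $\gamma$-sum to finitely many terms is a nice touch that the paper leaves implicit.
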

\begin{proof}
From \eqref{distBanomalousProp} we write
\begin{align*}
& \mathbb{E} \left[ Y_{lm}(\mathfrak{B}^{x}_{\nu}(t))\,Y_{lm}^{*}(\mathfrak{B}^{x}_{\nu}(t)) \right]\\
= & \int_{\mathbb{S}^2_1} u_{\nu}(y,t; x,t_0)\,  Y_{lm}(y)\, Y_{lm}^{*}(y)\, \lambda(dy) \\
= & \sum_{\gamma=0}^\infty\sum_{\kappa=-\gamma} ^{\gamma} \, E_{\nu}(- \mu_\gamma (t-t_0)^\nu) Y^*_{\gamma \kappa}(x) \int_{\mathbb{S}^2_1}  Y_{\gamma \kappa}(y)\,Y_{lm}(y)\, Y_{lm}^{*}(y)\, \lambda(dy)\\
= & (-1)^m \sum_{\gamma=0}^\infty\sum_{\kappa=-\gamma} ^{\gamma} \, E_{\nu}(- \mu_\gamma (t-t_0)^\nu) Y^*_{\gamma \kappa}(x)  \int_{\mathbb{S}^2_1}  Y_{\gamma \kappa}(y)\, Y_{lm}(y)\, Y_{l-m}(y)\, \lambda(dy).
\end{align*}
From the fact that (see formula \eqref{intprodtreY})
\begin{equation*}
\int_{\mathbb{S}^2_1} Y_{\gamma \kappa}(y) \, Y_{lm}(y) \, Y_{l-m}(y)\, \lambda(y) = \Psi_{\gamma}^{l} \left( \begin{array}{ccc} \gamma & l & l \\ 0 & 0 & 0  \end{array} \right) \left( \begin{array}{ccc} \gamma & l & l \\ \kappa & m& -m  \end{array} \right)
\end{equation*}
where
\begin{equation}
\Psi_{\gamma}^{l} = (2l+1) \sqrt{\frac{(2\gamma +1)}{4 \pi}},
\end{equation}
we arrive at formula \eqref{respropEYY}.
\end{proof}

%

\begin{os}
We use an alternative approach in order to show that
\begin{equation*}
Var[\mathfrak{T}^{\nu}_t (x)]=Var[T(x)] = \sum_{l \geq 0} \frac{2l+1}{4\pi}C_l < \infty
\end{equation*}
or equivalently that
\begin{equation*}
\mathbb{E}[T(s_t gx)]^2 = \mathbb{E}[T(x)]^2, \quad \forall\, g \in SO(3), \quad t>t_0 \geq 0
\end{equation*}
as already stated in Lemma \ref{lemMomenti}.
\end{os}
\begin{proof}
 We have that
\begin{align*}
\mathbb{E}[\mathfrak{T}^{\nu}_t (x)]^2 = & \mathbb{E} \Bigg[ \mathbb{E} \bigg\{\Big[ \sum_{lm} a_{lm} Y_{lm}(\mathfrak{B}^{x}_{\nu}(t)) \Big]^{2} \Big| \mathfrak{F}_{\mathfrak{B}} \bigg\}\Bigg]\\
= & \mathbb{E} \Bigg[ \mathbb{E} \bigg\{\Big[ \sum_{l_1m_1} \sum_{l_2m_2} a_{l_1m_1} a_{l_2m_2}\, Y_{l_1m_1}(\mathfrak{B}^{x}_{\nu}(t)) \, Y_{l_2m_2}(\mathfrak{B}^{x}_{\nu}(t)) \Big] \Big| \mathfrak{F}_{\mathfrak{B}} \bigg\} \Bigg]
\end{align*}
where
\begin{equation}
\mathbb{E} [a_{l_1m_1} a_{l_2m_2}] = (-1)^{m_1} \delta_{l_1}^{l_2}\delta_{m_1}^{-m_2}C_{l_1}
\end{equation}
and $(-1)^m Y_{l-m} = Y^{*}_{lm}$. Thus, we obtain
\begin{align*}
\mathbb{E}[\mathfrak{T}^{\nu}_t (x)]^2 = & \mathbb{E} \Bigg[ \sum_{l_1m_1} (-1)^{m_1} C_{l_1}\, Y_{l_1m_1}(\mathfrak{B}^{x}_{\nu}(t)) \, Y_{l_1-m_1}(\mathfrak{B}^{x}_{\nu}(t))  \Bigg]\\
= & \sum_{l_1m_1} C_{l_1}\, \mathbb{E} \left[ Y_{l_1m_1}(\mathfrak{B}^{x}_{\nu}(t)) \, Y_{l_1m_1}^{*}(\mathfrak{B}^{x}_{\nu}(t))  \right]
\end{align*}
where
\begin{align}
\mathbb{E} \left[ Y_{lm}(\mathfrak{B}^{x}_{\nu}(t))\,Y_{lm}^{*}(\mathfrak{B}^{x}_{\nu}(t)) \right] =  \sum_{\gamma \kappa} \mathcal{C}_{\gamma, \kappa}^{lm}  \, \, E_{\nu}(- \mu_\gamma (t-t_0)^\nu) Y^*_{\gamma \kappa}(x)
\end{align}
and
\begin{equation*}
\mathcal{C}_{\gamma, \kappa}^{lm} = (-1)^m (2l+1) \sqrt{\frac{(2\gamma +1)}{4 \pi}} \left( \begin{array}{ccc} \gamma & l & l \\ 0 & 0 & 0  \end{array} \right) \left( \begin{array}{ccc} \gamma & l & l \\ \kappa & m& -m  \end{array} \right)
\end{equation*}
as shown in Proposition \ref{propEYY}. From this, we write
\begin{align*}
& \sum_{m=-l}^{+l} \mathbb{E} \left[ Y_{lm}(\mathfrak{B}^{x}_{\nu}(t))\,Y_{lm}^{*}(\mathfrak{B}^{x}_{\nu}(t)) \right]\\
= &  \sum_{m=-l}^{+l} (-1)^m \sum_{\gamma, \kappa} \Psi_{\gamma}^{l}\, \, E_{\nu}(- \mu_\gamma (t-t_0)^\nu) Y^*_{\gamma \kappa}(x) \left( \begin{array}{ccc} \gamma & l & l \\ 0 & 0 & 0  \end{array} \right) \left( \begin{array}{ccc} \gamma & l & l \\ \kappa & m& -m  \end{array} \right)\\
= & \sum_{\gamma \kappa} \Psi_{\gamma}^{l}\, \, E_{\nu}(- \mu_\gamma (t-t_0)^\nu) Y^*_{\gamma \kappa}(x) \left( \begin{array}{ccc} \gamma & l & l \\ 0 & 0 & 0  \end{array} \right) \sum_m (-1)^m  \left( \begin{array}{ccc} \gamma & l & l \\ \kappa & m& -m  \end{array} \right)
\end{align*}
where
\begin{equation*}
\Psi_{\gamma}^{l} = (2l+1) \sqrt{\frac{(2\gamma +1)}{4 \pi}}.
\end{equation*}
From \eqref{orth3} and the fact that (\cite{VMK08})
\begin{equation*}
\left( \begin{array}{ccc}
l&l&0 \\ 0&0&0
\end{array} \right) = \frac{1}{\sqrt{2l+1}}
\end{equation*}
we get that
\begin{equation*}
\left( \begin{array}{ccc} \gamma & l & l \\ 0 & 0 & 0  \end{array} \right) \sum_{m=-l}^{+l} (-1)^m  \left( \begin{array}{ccc} \gamma & l & l \\ \kappa & m& -m  \end{array} \right) = \delta_{\gamma}^{0} \delta_{\kappa}^{0}
\end{equation*}
and, by taking into account the fact that (see for example \cite{VMK08})
$$Y_{00}(\vartheta, \phi) = \frac{1}{2\sqrt{\pi}}$$
and $E_\nu(0)=1$, $R_0=1$, we obtain that
\begin{align*}
\mathbb{E}[\mathfrak{T}_t^{\nu}(x)]^2 = & \sum_{lm} C_l\, \mathbb{E} \left[ Y_{lm}(\mathfrak{B}^{x}_{\nu}(t))\,Y_{lm}^{*}(\mathfrak{B}^{x}_{\nu}(t)) \right] \\
= & \sum_{l} C_l\, \sum_{\gamma \kappa} \Psi_{\gamma}^{l}\, \, E_{\nu}(- \mu_\gamma (t-t_0)^\nu) Y^*_{\gamma \kappa}(x)  \delta_{\gamma}^0\, \delta_{\kappa}^{0}\\
= & Y_{00}^{*}(x) \sum_l C_l\, \frac{2l+1}{\sqrt{4\pi}} = \mathbb{E}[T(x)]^2, \quad \forall x \in \mathbb{S}^2_{1}
\end{align*}
which is the claimed result.

\end{proof}
In order to study the time- and space-covariance of the random field
\begin{equation*}
\{\mathfrak{T}^{\nu}_t (x)=T(\mathfrak{B}^{x}_{\nu}(t)) ;\, x \in \mathbb{S}^2_1,\, t>t_0 \geq 0 \}, \quad \nu \in (0,1].
\end{equation*}
We assume that
for given starting points $x, y \in \mathbb{S}^2_1$ at $t_0 \geq 0$, the processes $\mathfrak{B}^{x}_{\nu}(t_1)$ and $\mathfrak{B}^{y}_{\nu}(t_2)$ are independent if and only if $x \neq y$. If $x=y$, then $\mathfrak{B}^{x}_{\nu}(t_1)$ and $\mathfrak{B}^{y}_{\nu}(t_2)$ are two dependent copies of a time-changed rotational Brownian motion starting from $x=y$ at $t_0$.

From the representation
\begin{equation}
\mathfrak{T}^{\nu}_t (x) = T(\mathfrak{B}^x_{\nu}(t)) = \sum_{l = 0}^{\infty} \sum_{m=-l}^{+l} a_{lm} Y^{*}_{lm}(\mathfrak{B}^x_{\nu}(t)), \quad t>t_0
\end{equation}
we can write
\begin{align}
\mathbb{E}[ \mathfrak{T}^{\nu}_{t_1}(x)\, \mathfrak{T}^{\nu}_{t_2} (y) ] =  & \mathbb{E}\Bigg[ \mathbb{E} \bigg\{\Big[ T(\mathfrak{B}^x_{\nu}(t_1))\, T(\mathfrak{B}^y_{\nu}(t_2))  \Big] \Big| \; \mathfrak{F}_{\mathfrak{B}} \otimes \mathfrak{F}_{\mathfrak{B}}\bigg\} \; \Bigg]\nonumber \\
= & \mathbb{E}\Bigg[ \sum_{lm} C_l\, Y_{lm}(\mathfrak{B}^x_{\nu}(t_1))\, Y^{*}_{lm}(\mathfrak{B}^y_{\nu}(t_2)) \Bigg|  \; \mathfrak{F}_{\mathfrak{B}} \otimes \mathfrak{F}_{\mathfrak{B}} \; \Bigg]\nonumber \\
= & \sum_{lm} C_l\, \mathbb{E}\left[ Y_{lm}(\mathfrak{B}^x_{\nu}(t_1))\, Y^{*}_{lm}(\mathfrak{B}^y_{\nu}(t_2)) \right] \label{ProvCor}
\end{align}
for some $t_1, t_2 \geq 0$ and $x,y \in \mathbb{S}^2_1$. Next we give

\begin{proof}[{\bf Proof of Theorem \ref{second-main-thm}}]
We first consider the starting point $y=x$ at time $t_0 \geq 0$. \\
$i)$ Due to the fact that
\begin{align*}
\P\{ \mathfrak{B}^{x}_{\nu}(t_1) \in dy\, , \mathfrak{B}^x_{\nu}(t_0) \in dx \} = & \P\{ \mathfrak{B}^x_{\nu}(t_0) \in dx \}\,\P\{ \mathfrak{B}^{x}_{\nu}(t_1) \in dy\, | \mathfrak{B}^x_{\nu}(t_0) \in dx \} \\
= & \frac{1}{\lambda(\mathbb{S}^2_1)} \P\{ \mathfrak{B}^{x}_{\nu}(t_1) \in dy\, | \mathfrak{B}^x_{\nu}(t_0) \in dx \},
\end{align*}
for $0 \leq t_0 < t_1 < \infty$, we can write
\begin{align}
& \mathbb{E} \left[ Y_{lm}(\mathfrak{B}^{x}_{\nu}(t_1))\,Y_{lm}^{*}(\mathfrak{B}^{x}_{\nu}(t_0)) \right] \label{covst}\\
=&Y_{lm}^{*}(x)\mathbb{E} \left[ Y_{lm}(\mathfrak{B}^{x}_{\nu}(t_1))\, \right] \\
=& Y_{lm}^{*}(x)Y_{lm}(x)E_\nu (-\mu_l (t_1 - t_0)^\nu) \ \ \mathrm{by}\ equation \ \eqref{EYprop2}.
\end{align}
 Thus, formula \eqref{ProvCor} takes the form
\begin{align*}
\sum_{lm} C_l\, \mathbb{E} \left[ Y_{lm}(\mathfrak{B}^{x}_{\nu}(t_1))\,Y_{lm}^{*}(\mathfrak{B}^{x}_{\nu}(t_0)) \right] = &\sum_{lm} C_l\, Y_{lm}^{*}(x)Y_{lm}(x)E_\nu (-\mu_l (t_1 - t_0)^\nu) \, E_\nu (-\mu_l (t_1 - t_0)^\nu).\\
=&\sum_{lm}\frac{2l+1}{4\pi} C_l\, E_\nu (-\mu_l (t_1 - t_0)^\nu).
\end{align*}
We also have that
\begin{equation*}
\mathbb{E} \left[ Y_{lm}(\mathfrak{B}^{x}_{\nu}(t_1))\,Y_{lm}^{*}(\mathfrak{B}^{x}_{\nu}(t_2)) \right]\\
= \mathbb{E} \left[ Y_{lm}(\mathfrak{B}^{gx}_{\nu}(t_1))\,Y_{lm}^{*}(\mathfrak{B}^{gx}_{\nu}(t_2)) \right], \quad \forall\, g\in SO(3)
\end{equation*}
and this prove the first statement. \\

We consider the case in which $x\neq y$:\\

$ii)$ From the assumption that the processes $\mathfrak{B}^{x}_{\nu}(t_1)$ and $\mathfrak{B}^{y}_{\nu}(t_2)$ are independent if and only if $x \neq y$ we obtain that
\begin{equation}
\mathbb{E} [Y_{lm}(\mathfrak{B}^x_{\nu}(t_1))\, Y^{*}_{lm}(\mathfrak{B}^y_{\nu}(t_2)) ]= \mathbb{E} [Y_{lm}(\mathfrak{B}^x_{\nu}(t_1))] \cdot \mathbb{E}[Y^{*}_{lm}(\mathfrak{B}^y_{\nu}(t_2))], \quad \textrm{iff } x \neq y \label{Sfromth}
\end{equation}
where, from the formulae \eqref{EYprop2} and \eqref{EYprop1} of the Proposition \ref{propositionEY12}, we have that
\begin{equation}
\mathbb{E}[Y_{lm}(\mathfrak{B}^{x}_{\nu}(t_1))]  =  \, E_{\nu}(-\mu_l\,(t_1-t_0)^{\nu})  Y_{lm}(x) \label{meanY1}
\end{equation}
and
\begin{equation}
\mathbb{E} [Y_{lm}^{*}(\mathfrak{B}^{y}_{\nu}(t_2))] = \, E_{\nu}(-\mu_l \,(t_2-t_0)^{\nu})  Y_{lm}^{*}(y). \label{meanY2}
\end{equation}
For the sake of simplicity, we write
\begin{equation*}
 \mathcal{E}_l(t_0, t_1, t_2) = E_{\nu}(-\mu_l \,(t_1-t_0)^{\nu})E_{\nu}(-\mu_l \,(t_2-t_0)^{\nu}).
\end{equation*}
From \eqref{meanY1} and \eqref{meanY2}, the formula \eqref{ProvCor} takes the form
\begin{equation*}
\sum_{lm} C_l\, \mathbb{E}\left[ Y_{lm}(\mathfrak{B}^x_{\nu}(t_1))\, Y^{*}_{lm}(\mathfrak{B}^y_{\nu}(t_2)) \right] = \sum_{l \geq 0} C_l\,  \, \mathcal{E}_l(t_0, t_1, t_2) \sum_{|m| \leq l} Y_{lm}(x) \, Y_{lm}^*(y).
\end{equation*}
By taking into account the addition formula \eqref{additionT} we get that
\begin{equation*}
\sum_{lm} C_l\, \mathbb{E}\left[ Y_{lm}(\mathfrak{B}^x_{\nu}(t_1))\, Y^{*}_{lm}(\mathfrak{B}^y_{\nu}(t_2)) \right] = \sum_{l \geq 0} C_l\,  \, \mathcal{E}_l(t_0, t_1, t_2) \frac{2l+1}{4\pi} P_l(\langle x, y \rangle)
\end{equation*}
and the claimed result immediately follows.
\end{proof}

\begin{os}
\label{RemarkShortLongDep}
 From  \eqref{TrepSec1} we arrive at the representation
$$\mathfrak{T}^\nu_t(x) = \sum_{l \geq 0} \mathfrak{T}^\nu_{l,t}(x)$$
where the projection of $\mathfrak{T}^\nu_t$ on $\mathcal{H}_l$, space generated by $\{Y_{lm}: |m\leq l|\}$,  written as
$$\mathfrak{T}^\nu_{l,t}(x) = \sum_{|m| \leq l} a_{lm}Y_{lm}(\mathfrak{B}^x_\nu(t)), \quad t>0,\; l \geq 0$$
is a random field representing the $l$th frequency component of $\mathfrak{T}^\nu_t$. The asymptotic properties of $\mathfrak{T}^\nu_{l,t}$ (as $l \to \infty$) turn out to be very important in the analysis of the CMB radiation (see \cite{marin06, marin08}). In particular, we have that
\begin{align*}
\mathbb{E}[\mathfrak{T}^\nu_{l_1,t_1}(x) \mathfrak{T}^\nu_{l_2,t_2}(y)] = &  \sum_{|m_1|\leq l_1} \sum_{|m_2| \leq l_2} \mathbb{E}[a_{l_1m_1}a^*_{l_2m_2}] \, \mathbb{E}[Y_{l_1m_1}(\mathfrak{B}^x_\nu(t_1))\, Y^*_{l_2m_2}(\mathfrak{B}^y_\nu(t_2))].
\end{align*}
From \eqref{propa2} we get that
\begin{align*}
\mathbb{E}[\mathfrak{T}^\nu_{l_1,t_1}(x) \mathfrak{T}^\nu_{l_2,t_2}(y)] = & \delta_{l_1}^l\delta_{l_2}^l \sum_{|m|\leq l}  C_{l} \, \mathbb{E}[Y_{lm}(\mathfrak{B}^x_\nu(t_1))\, Y^*_{lm}(\mathfrak{B}^y_\nu(t_2))]\\
= &  \delta_{l_1}^l\delta_{l_2}^l \sum_{|m|\leq l}  C_l \, \mathbb{E}[Y_{lm}(\mathfrak{B}^x_\nu(t_1))]\, \mathbb{E}[Y^*_{lm}(\mathfrak{B}^y_\nu(t_2))]
\end{align*}
where in the last step we used the fact that processes $\mathfrak{B}^{x}_{\nu}(t_1)$ and $\mathfrak{B}^{y}_{\nu}(t_2)$ are independent if and only if $x \neq y$. From Proposition \ref{propositionEY12} we get
\begin{align*}
\mathbb{E}[\mathfrak{T}^\nu_{l_1,t_1}(x) \mathfrak{T}^\nu_{l_2,t_2}(y)] = &  \delta_{l_1}^l\delta_{l_2}^l \sum_{|m|\leq l}  C_l \, \,  E_\nu(-\mu_l (t_1-t_0)^\nu)\, E_\nu(-\mu_l (t_2-t_0)^\nu)\, Y_{lm}(x)Y^*_{lm}(y)
\end{align*}
and, from \eqref{symmetry-equality} we arrive at
\begin{align}
\mathbb{E}[\mathfrak{T}^\nu_{l_1,t_1}(x) \mathfrak{T}^\nu_{l_2,t_2}(y)] = & \delta_{l_1}^l\delta_{l_2}^l \frac{2l+1}{4\pi} C_l \, \, E_\nu(-\mu_l (t_1-t_0)^\nu)\, E_\nu(-\mu_l (t_2-t_0)^\nu)\, P_l(\langle x, y \rangle) \label{cov}
\end{align}
which is the (space/time) covariance of the $l$th frequency component of $\mathfrak{T}^\nu_t$. Formula \eqref{genCor2} can be therefore rewritten as follows
\begin{align}
\mathbb{E}[\mathfrak{T}^{\nu}_{t_1} (x)\, \mathfrak{T}^{\nu}_{t_2}(y) ] = & \sum_{l \geq 0} \mathbb{E}[\mathfrak{T}^\nu_{l,t_1}(x) \mathfrak{T}^\nu_{l,t_2}(y)].  \label{ShortLongRemark}
\end{align}
We are now interested in studying the high-frequency behavior of such covariance. Usually the angular power spectrum of $T$ is assumed to be as $C_l = l^{-\alpha} g(l)$ where $\alpha > 2$ (to ensure summability) and $g(\cdot)$ is a smooth function which converges to a constant as the frequency $l\to \infty$ (for instance $g(l)=G_1(l)/G_2(l)$ and $G_1$, $G_2$ are polynomials of the same order).  The important fact is that $E_\nu$ has different behaviour depending on $\nu$ as we know. Let us take $t_0=0$ for the sake of simplicity. The symbols $\delta_{l_1}^l \delta_{l_2}^l$ in \eqref{cov} say that the $l$th components are uncorrelated for different frequencies $l_1, l_2$ whereas, for large $l$ and  every spherical distance $\langle x, y \rangle$
\begin{align*}
\mathbb{E}[\mathfrak{T}^\nu_{l,t_1+h}(x) \mathfrak{T}^\nu_{l,t_1}(y)] \sim { Ct_1^{-\nu} l^{-\theta_1^*}\, h^{-\nu}}, \; \nu \in (0,1) {\quad if \; t_0=0}
\end{align*}
where $\theta_1^*=\alpha+3 >2$
and, for $\nu=1$,
\begin{align*}
\mathbb{E}[\mathfrak{T}^\nu_{l,t_1+h}(x) \mathfrak{T}^\nu_{l,t_1}(y)] \sim  D l^{-\theta_2^*}\, e^{-h l(l+1)}
\end{align*}
where $\theta_2^*=\alpha-1 >1$ and  $C,D>0$ are some constant that do not depend on $l$ or $h$. This means that series \eqref{ShortLongRemark} converges with rate of convergence depending on $\nu$ and therefore, the $l$th frequency component (random filed) $\mathfrak{T}^\nu_{l,t}$ has different covariance structure as $l \to \infty$. Also, for $\nu \in (0,1)$, the time covariance decays more slowly than an exponential decay (for $\nu=1$).
\end{os}

We next give the  proof of Theorem \ref{covariance-nu-1}

\begin{proof}[{\bf Proof of Theorem \ref{covariance-nu-1}}]
For $\nu=1$, we have a Markovian diffusion as the Chapman-Kolmogorov relation \eqref{non-Markov} entails.   Due to the Markov property we have that
\begin{align*}
\P\{\mathfrak{B}^{x}(t_2) \in dz_2 ,\, \mathfrak{B}^{x}(t_1) \in dz_1 \} = & \P\{\mathfrak{B}^{x}(t_2) \in dz_2 | \, \mathfrak{B}^{x}(t_1) \in dz_1 \}\\
& \times \P\{ \mathfrak{B}^{x}(t_1) \in dz_1| \mathfrak{B}^{x}(t_0) \in dx \} \\
& \times \P\{ \mathfrak{B}^x(t_0) \in dx \}
\end{align*}
for $t_2\geq t_1>t_0\geq 0$ where, for all $x \in \mathbb{S}^2_1$,
\begin{equation*}
\P\{ \mathfrak{B}^{x}(t_0) \in dx \}  = 1/ \lambda(\mathbb{S}^2_1).
\end{equation*}
Thus, we obtain that
\begin{align*}
& \mathbb{E} \left[ Y_{lm}(\mathfrak{B}^{x}(t_1))\,Y_{lm}^{*}(\mathfrak{B}^{x}(t_2)) \right]\\
= & \, \exp(-\mu_{l} (t_2-t_1)) \int_{\mathbb{S}^2} Y_{lm}(z_1)\, Y_{lm}^{*}(z_1)\, u_1(z_1,t_1; x, t_0)\, \lambda(dz_1)\\
= & \,\exp(-\mu_{l} (t_2-t_1)) \sum_{\gamma_2 \kappa_2} \,\exp(-\mu_{\gamma_2} (t_1-t_0)) \\
& \times \int_{\mathbb{S}^2} Y_{lm}(z_1)\, Y_{lm}^{*}(z_1)\, Y_{\gamma_2 \kappa_2}(z_1)\, Y_{\gamma_2 \kappa_2}^{*}(x)\, \lambda(dz_1)\\
= &  \,\exp(-\mu_{l} (t_2-t_1)) \sum_{\gamma_2 \kappa_2} \, \exp(-\mu_{\gamma_2} (t_1-t_0)) \, Y_{\gamma_2 \kappa_2}^{*}(x)\\
& \times \sqrt{\frac{(2l+1)^2 (2\gamma_2 +1)}{4\pi}} (-1)^{m} \left( \begin{array}{ccc} l&l&\gamma_2 \\ 0&0&0 \end{array} \right)  \left( \begin{array}{ccc} l&l&\gamma_2 \\ m&-m&\kappa_2 \end{array} \right)
\end{align*}
where we have used the formula \eqref{intprodtreY} and the fact that $Y_{lm}^{*}(z_1) = (-1)^m Y_{l-m}(z_1)$. We write
\begin{align*}
& \mathcal{C}^{lm}_{\gamma_2\kappa_2} =   (-1)^{m} \,(2l+1) \sqrt{\frac{2\gamma_2 +1}{4\pi}}  \left( \begin{array}{ccc} l&l&\gamma_2 \\ 0&0&0 \end{array} \right) \left( \begin{array}{ccc} l&l&\gamma_2 \\ m&-m&\kappa_2 \end{array} \right)
\end{align*}
and therefore we get that
\begin{equation*}
\mathbb{E} \left[ Y_{lm}(\mathfrak{B}^{x}(t_1))\,Y_{lm}^{*}(\mathfrak{B}^{x}(t_2)) \right] =  \, e^{-\mu_{l} (t_2-t_1)} \sum_{\gamma_2 \kappa_2} \mathcal{C}^{lm}_{\gamma_2\kappa_2}\, \,e^{-\mu_{\gamma_2} (t_1-t_0)} \, Y_{\gamma_2 \kappa_2}^{*}(x).
\end{equation*}
From \cite{VMK08}
\begin{equation*}
\left( \begin{array}{ccc}
l&l&0 \\ 0&0&0
\end{array} \right) = \frac{1}{\sqrt{2l+1}}
\end{equation*}
and the fact that (see formula \eqref{orth3} or the book by \cite{BalLov09})
\begin{equation*}
\sum_{m=-l}^{+l} (-1)^{m} \left( \begin{array}{ccc} l&l&\gamma_2 \\ m&-m&\kappa_2 \end{array} \right) = \delta_{\gamma_2 \kappa_2}^{00} \sqrt{2l +1}
\end{equation*}
we obtain that
\begin{equation*}
\sum_{m=-l}^{+l} \mathcal{C}^{lm}_{\gamma_2\kappa_2}= \frac{2l+1}{\sqrt{4\pi}} \, \,\delta_{\gamma_2 \kappa_2}^{00}.
\end{equation*}
By recalling that $Y_{00}(x) = 1/\sqrt{4\pi}$  we get that
\begin{equation}
\sum_{m} \mathbb{E} \left[ Y_{lm}(\mathfrak{B}^{x}(t_1))\,Y_{lm}^{*}(\mathfrak{B}^{x}(t_2)) \right] =  \frac{2l+1}{4\pi} \, \exp(-\mu_{l} (t_2-t_1)) \label{ProvCorA}
\end{equation}
which does not depend on $x$ and $t_0$. From this and formula \eqref{ProvCor} we arrive at the claimed result.
\end{proof}

\begin{os}\label{covariance-time-changed-BM}
The (equilibrium) covariance function of the TRD is given by
\begin{equation}
\textsc{Cov}_{\mathfrak{B}}(h; \nu):=\frac{4\pi}{3}\mathbb{E}[Y_{10}\left(\mathfrak{B}^{x}_{\nu}(t_0+h)\right)\, Y_{10}\left(\mathfrak{B}^x_{\nu}(t_0)\right) = \frac{1}{3} E_\nu(- \mu_1\, h^\nu), \quad h >0. \label{eqCov}
\end{equation}

Indeed, from the fact that
\begin{align*}
\P\{ \mathfrak{B}^{x}_{\nu}(t) \in dy\, , \mathfrak{B}^x_{\nu}(t_0) \in dx \} = & \P\{ \mathfrak{B}^x_{\nu}(t_0) \in dx \}\,\P\{ \mathfrak{B}^{x}_{\nu}(t) \in dy\, | \mathfrak{B}^x_{\nu}(t_0) \in dx \} \\
= & \frac{1}{\lambda(\mathbb{S}^2_1)} \P\{ \mathfrak{B}^{x}_{\nu}(t) \in dy\, | \mathfrak{B}^x_{\nu}(t_0) \in dx \},
\end{align*}
we have to consider the integral
\begin{align*}
\frac{1}{\lambda(\mathbb{S}^2_1)} \int_{\mathbb{S}^2_1} \int_{\mathbb{S}^2_1} \cos \vartheta\, \sin \vartheta\, \cos \vartheta_0 \, \sin \vartheta_0\,u_\nu(\vartheta, \varphi, t; \vartheta_0, \varphi_0, t_0) \, d\vartheta\, d\vartheta_0 .
\end{align*}
By taking into account the formula (see \cite{VMK08})
\begin{equation}
Y_{10}(\vartheta, \varphi) = \sqrt{\frac{3}{4\pi}} \cos \vartheta \label{Y10cos}
\end{equation}
and Proposition \ref{propEYY} for $l=1$ and $m=0$, the above integral becomes the covariance \eqref{eqCov}, that is
\begin{equation*}
\textsc{Cov}_{\mathfrak{B}}(h; \nu)= \frac{4\pi}{3}\mathbb{E}[Y_{10}\left(\mathfrak{B}^{x}_{\nu}(t_0+h)\right)\, Y_{10}\left(\mathfrak{B}^x_{\nu}(t_0)\right).
\end{equation*}
Also, for $\nu=1$ we have that
\begin{equation*}
\textsc{Cov}_{\mathfrak{B}}(h; 1)= \frac{1}{3} \exp \left( - \mu_1\, h \right)
\end{equation*}
which means that the rotational Brownian motion has an exponential memory kernel whereas, for $\nu \in (0,1)$, formula \eqref{eqCov} says that the TRD has a covariance function with polynomial memory kernel, in particular $\textsc{Cov}_{\mathfrak{B}}(h; \nu) \sim h^{-\nu}$ as $h \to \infty$. Thus, we obtain that
\begin{equation}
\sum_{h=1}^\infty  \textsc{Cov}_{\mathfrak{B}}(h; \nu) = \infty
\end{equation}
and therefore the TRD has long range dependence for $\nu \in (0,1)$ whereas,
\begin{equation}
\sum_{h=1}^\infty  \textsc{Cov}_{\mathfrak{B}}(h; 1) = \frac{1}{3}\frac{1}{e^{\mu_1} - 1}
\end{equation}
implies  short range dependence for the rotational Brownian motion ($\nu=1$).
\end{os}

\appendix

\section{Background on Clebsch-Gordan coefficients}
\label{AppA}
The Clebsch-Gordan (CG) coefficients are used in mathematics and in particular in representation theory of compact Lie groups. In physics, CG coefficients represent a set of numbers arising in angular momentum coupling under the laws of quantum mechanics. In this work we deal with CG and therefore Wigner coefficients because of their useful connection with integrals of spherical harmonics. For a deep discussion on spherical harmonics and Wigner coefficients we refer to the book \cite{VMK08} while, for the interesting technique related to the graphical theory of angular momentum we refer to the book  \cite{ BalLov09}. Here we only recall some relation which turns out to be useful in the text. We write the CG coefficients by means of the Wigner $3j$-symbols as follows
\begin{equation}
C^{l_3m_3}_{l_1m_1l_2m_2} = (-1)^{l_1-l_2+m_3} \sqrt{2l_3+1}\left( \begin{array}{ccc} l_1&l_2&l_3\\m_1&m_2&-m_3 \end{array} \right) \label{Wig3jSymb}
\end{equation}
with $|m_i | \leq l_i$, $i =1,2,3$. The Wigner's coefficient can be explicitly written as
\begin{align*}
\left( \begin{array}{ccc} l_1&l_2&l_3\\m_1&m_2&m_3 \end{array} \right) = & (-1)^{l_3+m_3+l_2+m_2} \left[ \frac{(l_1+l_2-l_3)!(l_1-l_2+l_3)!(-l_1+l_2+l_3)!}{(l_1+l_2+l_3+1)!} \right]^{1/2}\\
& \times \left[ \frac{(l_3+m_3)!(l_3-m_3)!}{(l_1+m_1)!(l_1-m_1)!(l_2+m_2)!(l_2-m_2)!} \right]^{1/2}\\
& \times \sum_z \frac{(-1)^z (l_2+l_3+m_1-z)!(l_1-m_1+z)!}{z!(l_2+l_3-l_1-z)!(l_3+m_3-z)!(l_1-l_2-m_3+z)!}
\end{align*}
where the summation runs over all $z$'s such that the factorials are non-negative. For the $3j$-symbols we have the following properties.\\
Triangle conditions: the Wigner's 3j coefficients are real-valued and they are different from zero only if $m_1+m_2+m_3=0$.\\
Parity:
\begin{equation}
\left( \begin{array}{ccc} l_1&l_2&l_3\\m_1&m_2&m_3 \end{array} \right) = (-1)^{l_1+l_2+l_3} \left( \begin{array}{ccc} l_1&l_2&l_3\\-m_1&-m_2&-m_3 \end{array} \right).
\end{equation}
Symmetry: for all $l_1,l_2l_3 \geq 0$
\begin{equation}
\left( \begin{array}{ccc} l_1&l_2&l_3\\m_1&m_2&m_3 \end{array} \right) = \left( \begin{array}{ccc} l_2&l_3&l_1\\m_2&m_3&m_1 \end{array} \right) = \left( \begin{array}{ccc} l_3&l_1&l_2\\m_3&m_1&m_2 \end{array} \right).
\end{equation}
Orthonormality:
\begin{equation}
\sum_{m_1m_2} \left( \begin{array}{ccc} l_1 & l_2 & L\\m_1 & m_2 & M \end{array} \right) \left( \begin{array}{ccc}l_1 & l_2 & \xi_1 \\ m_1 & m_2 & \mu_1 \end{array} \right) = \frac{\delta_{L}^{\xi_1}\, \delta_{M}^{\mu_1}}{(2L+1)}, \label{orth1}
\end{equation}

\begin{equation}
\sum_{m} (-1)^{l-m}\left(
\begin{array}{ccc}
l&l&\gamma \\ m&-m&\kappa
\end{array} \right) = \delta_\gamma^0\, \delta_\kappa^0\, \sqrt{2l+1},
\label{orth2}
\end{equation}

\begin{equation}
\sum_{lm} (2l+1) \left( \begin{array}{ccc} l_1&l_2&l\\m_1&m_2&m \end{array} \right) \left( \begin{array}{ccc} l_1&l_2&l\\M_1&M_2&m \end{array} \right) = \delta_{M_1}^{m_1}\, \delta_{M_2}^{m_2}
\label{orth3}
\end{equation}
and
\begin{equation}
\sum_{m_1m_2m_3} \left( \begin{array}{ccc} l_1&l_2&l_3\\m_1&m_2&m_3 \end{array} \right) \left( \begin{array}{ccc} l_1&l_2&l_3\\m_1&m_2&m_3 \end{array} \right) = 1
\label{orth4}
\end{equation}
where
\begin{equation}
\delta_a^b=\left\lbrace \begin{array}{ll} 1, & a=b\\0, & a \neq b \end{array} \right . \label{kronSy}
\end{equation}
is the Kronecker's delta symbol.\\

For the integrals involving spherical harmonics we recall that
\begin{align}
&\int_{0}^{2\pi} d\varphi \int_{0}^{\pi} d\vartheta \, \sin \vartheta\, Y_{l_1m_1}(\vartheta, \varphi)\, Y_{l_2m_2}(\vartheta, \varphi) =  (-1)^{m_2} \delta_{l_1}^{l_2}\delta_{m_1}^{-m_2} \label{appendixInt2Y}
\end{align}
and
\begin{align}
&\int_{0}^{2\pi} d\varphi \int_{0}^{\pi} d\vartheta \, \sin \vartheta\, Y_{l_1m_1}(\vartheta, \varphi)\, Y_{l_2m_2}(\vartheta, \varphi)\, Y_{l_3m_3}(\vartheta, \varphi) \label{intprodtreY} \\
= &\sqrt{\frac{(2l_1+1)(2l_2+1)(2l_3+1)}{4\pi}}  \left( \begin{array}{ccc}l_1&l_2&l_3\\0&0&0 \end{array} \right)  \left( \begin{array}{ccc}l_1&l_2&l_3\\m_1&m_2&m_3 \end{array} \right) \nonumber .
\end{align}


\end{document}